\newtheorem{theorem}{Theorem}
\theoremstyle{plain}
\newtheorem{corollary}{Corollary}
\newtheorem{lemma}{Lemma}
\newtheorem{proposition}{Proposition}
\newtheorem{remark}{Remark}
\numberwithin{equation}{section}
\begin{document}
\title[HERMITE-HADAMARD TYPE INEQUALITIES]{NEW INEQUALITIES OF
HERMITE-HADAMARD TYPE FOR CONVEX FUNCTIONS WITH APPLICATIONS}
\author{HAVVA KAVURMACI$^{\clubsuit ,\blacktriangle }$}
\address{$^{\blacktriangle }$ATAT\"{U}RK UNIVERSITY, K.K. EDUCATION FACULTY,
DEPARTMENT OF MATHEMATICS, 25240, CAMPUS, ERZURUM, TURKEY}
\email{havva.kvrmc@yahoo.com}
\thanks{$^{\clubsuit }$Corresponding Author}
\author{MERVE AVCI$^{\blacktriangle }$}
\email{merveavci@ymail.com}
\author{M. Emin \"{O}ZDEM\.{I}R$^{\blacktriangle }$}
\email{emos@atauni.edu.tr}
\subjclass[2000]{ 26A51 , 26D10 , 26D15}
\keywords{Convex function, Hermite-Hadamard inequality, H\"{o}lder
inequality, Power-mean inequality, Special means, Trapezoidal formula}

\begin{abstract}
In this paper, some new inequalities of the Hermite-Hadamard type for
functions whose modulus of the derivatives are convex and applications for
special means are given. Finally, some error estimates for the trapezoidal
formula are obtained.
\end{abstract}

\maketitle

\section{INTRODUCTION}

A function $f:I\rightarrow 
\mathbb{R}
$ is said to be convex function on $I$ if the inequality

\begin{equation*}
f(\alpha x+(1-\alpha )y)\leq \alpha f(x)+(1-\alpha )f(y),
\end{equation*}%
holds for all $x,y\in I$ and $\alpha \in \left[ 0,1\right] $.

One of the most famous inequality for convex functions is so called
Hermite-Hadamard's inequality as follows: Let $f:I\subseteq 
\mathbb{R}
\rightarrow 
\mathbb{R}
$ be a convex function defined on the interval $I$ of real numbers and $%
a,b\in I$, with $a<b$. Then :

\begin{equation}
f\left( \frac{a+b}{2}\right) \leq \frac{1}{b-a}\int_{a}^{b}f(x)dx\leq \frac{%
f(a)+f(b)}{2}.  \label{1.1}
\end{equation}

In \cite{SDRA}, the following theorem which was obtained by Dragomir and
Agarwal contains the Hermite-Hadamard type integral inequality.

\begin{theorem}
\label{t.1.1} Let $f:I^{\circ }\subseteq 
\mathbb{R}
\rightarrow 
\mathbb{R}
$ be a differentiable mapping on $I^{\circ }$, $a,b\in I^{\circ }$ with $%
a<b. $ If $\left\vert f^{\prime }\right\vert $ is convex on $\left[ a,b%
\right] $, then the following inequality holds:%
\begin{equation}
\left\vert \frac{f(a)+f(b)}{2}-\frac{1}{b-a}\int_{a}^{b}f(u)du\right\vert
\leq \frac{(b-a)\left( \left\vert f^{\prime }(a)\right\vert +\left\vert
f^{\prime }(b)\right\vert \right) }{8}.  \label{1.2}
\end{equation}
\end{theorem}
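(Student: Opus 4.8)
The plan is to isolate the quantity on the left of \eqref{1.2} as a weighted integral of $f^{\prime }$ along the segment joining $a$ and $b$, and then to estimate that integral by invoking the convexity of $\left\vert f^{\prime }\right\vert $. First I would establish the identity
\begin{equation*}
\frac{f(a)+f(b)}{2}-\frac{1}{b-a}\int_{a}^{b}f(u)\,du=\frac{b-a}{2}\int_{0}^{1}(1-2t)\,f^{\prime }\!\left( ta+(1-t)b\right) dt,
\end{equation*}
which comes from one integration by parts in the variable $t$. Writing $g(t)=f\!\left( ta+(1-t)b\right) $ one has $g^{\prime }(t)=(a-b)f^{\prime }\!\left( ta+(1-t)b\right) $, so that $\int_{0}^{1}(1-2t)g^{\prime }(t)\,dt=-g(1)-g(0)+2\int_{0}^{1}g(t)\,dt$; the substitution $x=ta+(1-t)b$ turns $\int_{0}^{1}g(t)\,dt$ into $\frac{1}{b-a}\int_{a}^{b}f(x)\,dx$, and rearranging produces the displayed identity. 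This is the one genuinely structural step; keeping track of the sign coming from $a-b$ and of the boundary terms is the only place where care is needed.

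Next I would take absolute values and move them inside the integral, which gives
\begin{equation*}
\left\vert \frac{f(a)+f(b)}{2}-\frac{1}{b-a}\int_{a}^{b}f(u)\,du\right\vert \leq \frac{b-a}{2}\int_{0}^{1}\left\vert 1-2t\right\vert \left\vert f^{\prime }\!\left( ta+(1-t)b\right) \right\vert dt.
\end{equation*}
Using the convexity of $\left\vert f^{\prime }\right\vert $ in the form $\left\vert f^{\prime }\!\left( ta+(1-t)b\right) \right\vert \leq t\left\vert f^{\prime }(a)\right\vert +(1-t)\left\vert f^{\prime }(b)\right\vert $ reduces the problem to the two elementary integrals $\int_{0}^{1}\left\vert 1-2t\right\vert t\,dt$ and $\int_{0}^{1}\left\vert 1-2t\right\vert (1-t)\,dt$. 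Under the symmetry $t\mapsto 1-t$ these two integrals are equal, and since they sum to $\int_{0}^{1}\left\vert 1-2t\right\vert dt=\tfrac{1}{2}$, each equals $\tfrac{1}{4}$. Substituting back, the right-hand side becomes $\frac{b-a}{2}\cdot \tfrac{1}{4}\left( \left\vert f^{\prime }(a)\right\vert +\left\vert f^{\prime }(b)\right\vert \right) $, which is precisely the bound in \eqref{1.2}.

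I do not anticipate a serious obstacle: the argument is essentially a one-line integration by parts followed by the Jensen-type bound for the convex function $\left\vert f^{\prime }\right\vert $ and a split of the weight $\left\vert 1-2t\right\vert $ at $t=\tfrac{1}{2}$. The only subtlety is bookkeeping — making sure the orientation of the substitution $x=ta+(1-t)b$, which reverses the limits of integration, is handled so that no stray sign survives in the final estimate.
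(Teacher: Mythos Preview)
Your argument is correct; it is exactly the original Dragomir--Agarwal proof (the identity with kernel $1-2t$ followed by the convexity bound and the evaluation $\int_{0}^{1}\lvert 1-2t\rvert\,t\,dt=\tfrac14$).

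Note, however, that in this paper Theorem~\ref{t.1.1} is quoted from \cite{SDRA} without proof; the paper only \emph{recovers} inequality~\eqref{1.2} later, in Remark~\ref{r.2.1}, as a by-product of its own machinery. That route is genuinely different from yours: the paper's Lemma~\ref{2.1} splits the difference $\frac{(b-x)f(b)+(x-a)f(a)}{b-a}-\frac{1}{b-a}\int_a^b f$ into \emph{two} integrals over $[0,1]$ with weight $1-t$, parametrised by a free point $x\in[a,b]$. Applying convexity (Theorem~\ref{t.2.1}) and then specialising $x=\tfrac{a+b}{2}$ gives Corollary~\ref{c.2.1}, and one further use of convexity on $\lvert f'(\tfrac{a+b}{2})\rvert\le\tfrac12(\lvert f'(a)\rvert+\lvert f'(b)\rvert)$ collapses it to \eqref{1.2}. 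Your single-kernel identity is shorter and tailored to \eqref{1.2}; the paper's two-piece identity is longer for this particular target but yields a family of Ostrowski-type inequalities for arbitrary $x$, of which \eqref{1.2} is just the midpoint case.
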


In \cite{UMMJ} K\i rmac\i , Bakula, \"{O}zdemir and Pe\v{c}ari\'{c} proved
the following theorem.

\begin{theorem}
\label{t.1.2} Let $f:I\rightarrow 
\mathbb{R}
$, $I\subset 
\mathbb{R}
$, be a differentiable function on $I^{\circ }$ such that $f^{\prime }\in L%
\left[ a,b\right] $, where $a,b\in I$, $a<b$. If $\left\vert f^{\prime
}\right\vert ^{q}$ is concave on $\left[ a,b\right] $ for some $q>1$, then:%
\begin{eqnarray}
&&\left\vert \frac{f(a)+f(b)}{2}-\frac{1}{b-a}\int_{a}^{b}f(u)du\right\vert
\label{1.3} \\
&\leq &\left( \frac{b-a}{4}\right) \left[ \frac{q-1}{2q-1}\right] ^{\frac{q-1%
}{q}}\left( \left\vert f^{\prime }\left( \frac{a+3b}{4}\right) \right\vert
+\left\vert f^{\prime }\left( \frac{3a+b}{4}\right) \right\vert \right) . 
\notag
\end{eqnarray}
\end{theorem}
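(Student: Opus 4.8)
The plan is to follow the Dragomir--Agarwal scheme, replacing the final convexity step by Jensen's inequality for the concave map $\left\vert f^{\prime }\right\vert ^{q}$. The starting point is the integral identity established in \cite{SDRA} (the one used there to prove Theorem \ref{t.1.1}), namely
\begin{equation*}
\frac{f(a)+f(b)}{2}-\frac{1}{b-a}\int_{a}^{b}f(u)\,du=\frac{b-a}{2}\int_{0}^{1}(1-2t)\,f^{\prime }(ta+(1-t)b)\,dt,
\end{equation*}
which is valid under precisely the hypotheses imposed on $f$ here. Taking moduli and using $|1-2t|$ under the integral, I would split the resulting integral at $t=\frac{1}{2}$: on $\left[ 0,\frac{1}{2}\right] $ one has $1-2t\geq 0$, and on $\left[ \frac{1}{2},1\right] $ one has $2t-1\geq 0$. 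This exhibits the left-hand side as $\frac{b-a}{2}$ times a sum of two integrals with non-negative, integrable weights.

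To each of the two pieces I would apply H\"{o}lder's inequality with exponents $q$ and $p=\frac{q}{q-1}$. For the piece over $\left[ 0,\frac{1}{2}\right] $ this yields the bound $\left( \int_{0}^{1/2}(1-2t)^{p}\,dt\right) ^{1/p}\left( \int_{0}^{1/2}\left\vert f^{\prime }(ta+(1-t)b)\right\vert ^{q}\,dt\right) ^{1/q}$. The first factor is the elementary integral $\left( \frac{1}{2(p+1)}\right) ^{1/p}$. For the second factor I would invoke the concavity of $\left\vert f^{\prime }\right\vert ^{q}$ through Jensen's inequality on the normalized subinterval: since the barycenter $2\int_{0}^{1/2}(ta+(1-t)b)\,dt$ equals $\frac{a+3b}{4}$, concavity gives $\int_{0}^{1/2}\left\vert f^{\prime }(ta+(1-t)b)\right\vert ^{q}\,dt\leq \frac{1}{2}\left\vert f^{\prime }\left( \frac{a+3b}{4}\right) \right\vert ^{q}$. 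The piece over $\left[ \frac{1}{2},1\right] $ is treated symmetrically, the relevant barycenter there being $\frac{3a+b}{4}$.

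Assembling the two estimates produces a bound of the shape $\frac{b-a}{2}\left( \frac{1}{2(p+1)}\right) ^{1/p}\left( \frac{1}{2}\right) ^{1/q}\left( \left\vert f^{\prime }\left( \frac{a+3b}{4}\right) \right\vert +\left\vert f^{\prime }\left( \frac{3a+b}{4}\right) \right\vert \right) $, and it only remains to tidy the constant. Using $\left( \frac{1}{2}\right) ^{1/p}\left( \frac{1}{2}\right) ^{1/q}=\frac{1}{2}$ together with $\frac{1}{p}=\frac{q-1}{q}$ and $p+1=\frac{2q-1}{q-1}$, the constant collapses to $\frac{b-a}{4}\left( \frac{q-1}{2q-1}\right) ^{(q-1)/q}$, which is exactly the asserted inequality \eqref{1.3}. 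I do not expect a genuine obstacle: the calculus is routine, and the only point needing a word of justification is the application of Jensen's inequality, where one should note that $t\mapsto ta+(1-t)b$ maps $[0,1]$ into $[a,b]$ — the interval on which $\left\vert f^{\prime }\right\vert ^{q}$ is, by hypothesis, a finite concave function (hence bounded and integrable there) — so that $\frac{1}{\mu(E)}\int_{E}\varphi\circ g\leq \varphi\!\left( \frac{1}{\mu(E)}\int_{E}g\right) $ is legitimate with $\varphi =\left\vert f^{\prime }\right\vert ^{q}$ on each half of $[0,1]$.
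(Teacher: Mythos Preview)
Your proof is correct, and the approach---H\"{o}lder's inequality to isolate the weight, followed by Jensen's inequality for the concave function $\left\vert f^{\prime }\right\vert ^{q}$---is essentially the same as the one the paper uses (via Theorem~\ref{t.2.3} and Remark~\ref{r.2.2}). The only cosmetic difference is that the paper works from its own Lemma~\ref{2.1} with a free point $x$ and then specializes to $x=\frac{a+b}{2}$, while you start directly from the Dragomir--Agarwal identity and split at $t=\frac{1}{2}$; after the obvious change of variable the two computations coincide.
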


For recent results and generalizations concerning Hermite-Hadamard's
inequality see \cite{CPJP}-\cite{UMMJ} and the references therein.

\section{THE NEW HERMITE-HADAMARD TYPE INEQUALITIES}

In order to prove our main theorems, we first prove the following lemma:

\begin{lemma}
\label{2.1} Let $f:I\subseteq 
\mathbb{R}
\rightarrow 
\mathbb{R}
$ be a differentiable mapping on $I^{\circ }$(the interior of $I$), where $%
a,b\in I$ with $a<b$. If $f^{\prime }\in L\left[ a,b\right] $, then the
following inequality holds:%
\begin{eqnarray*}
&&\frac{\left( b-x\right) f(b)+(x-a)f(a)}{b-a}-\frac{1}{b-a}%
\int_{a}^{b}f(u)du \\
&=&\frac{\left( x-a\right) ^{2}}{b-a}\int_{0}^{1}\left( t-1\right) f^{\prime
}(tx+(1-t)a)dt+\frac{\left( b-x\right) ^{2}}{b-a}\int_{0}^{1}\left(
1-t\right) f^{\prime }(tx+(1-t)b)dt.
\end{eqnarray*}

\begin{proof}
We note that%
\begin{eqnarray*}
I &=&\frac{\left( x-a\right) ^{2}}{b-a}\int_{0}^{1}\left( t-1\right)
f^{\prime }(tx+(1-t)a)dt \\
&&+\frac{\left( b-x\right) ^{2}}{b-a}\int_{0}^{1}\left( 1-t\right) f^{\prime
}(tx+(1-t)b)dt.
\end{eqnarray*}%
Integrating by parts, we get%
\begin{eqnarray*}
I &=&\frac{\left( x-a\right) ^{2}}{b-a}\left[ \left. \left( t-1\right) \frac{%
f(tx+(1-t)a)}{x-a}\right\vert _{0}^{1}-\int_{0}^{1}\frac{f(tx+(1-t)a)}{x-a}dt%
\right] \\
&&+\frac{\left( b-x\right) ^{2}}{b-a}\left[ \left. \left( 1-t\right) \frac{%
f(tx+(1-t)b)}{x-b}\right\vert _{0}^{1}+\int_{0}^{1}\frac{f(tx+(1-t)b)}{x-b}dt%
\right] \\
&=&\frac{\left( x-a\right) ^{2}}{b-a}\left[ \frac{f(a)}{x-a}-\frac{1}{\left(
x-a\right) ^{2}}\int_{a}^{x}f(u)du\right] \\
&&+\frac{\left( b-x\right) ^{2}}{b-a}\left[ -\frac{f(b)}{x-b}+\frac{1}{%
\left( x-b\right) ^{2}}\int_{b}^{x}f(u)du\right] \\
&=&\frac{(b-x)f(b)+(x-a)f(a)}{b-a}-\frac{1}{b-a}\int_{a}^{b}f(u)du\text{.}
\end{eqnarray*}
\end{proof}
\end{lemma}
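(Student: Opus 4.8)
The plan is to work backwards from the right-hand side. Write $I$ for the sum of the two integral terms and note that each summand has the form $c\int_0^1 g(t)\,f'(\varphi(t))\,dt$ with $\varphi$ affine; this invites integration by parts, since $\frac{d}{dt}f(\varphi(t)) = \varphi'(t)\,f'(\varphi(t))$ and $\varphi'$ is a nonzero constant, so an antiderivative of $f'(\varphi(t))$ in $t$ is $f(\varphi(t))/\varphi'(t)$.

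First I would treat the first term, with $\varphi(t)=tx+(1-t)a$, so $\varphi'=x-a$. Integrating $\int_0^1(t-1)f'(tx+(1-t)a)\,dt$ by parts with $u=t-1$ and $dv=f'(tx+(1-t)a)\,dt$ gives the boundary term $\big[(t-1)\tfrac{f(tx+(1-t)a)}{x-a}\big]_0^1$, which reduces to $\tfrac{f(a)}{x-a}$ because the $t=1$ contribution vanishes, minus $\int_0^1 \tfrac{f(tx+(1-t)a)}{x-a}\,dt$. The substitution $u=tx+(1-t)a$ (so $du=(x-a)\,dt$) turns this last integral into $\tfrac{1}{(x-a)^2}\int_a^x f(u)\,du$. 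Multiplying through by $\tfrac{(x-a)^2}{b-a}$ yields $\tfrac{(x-a)f(a)}{b-a}-\tfrac{1}{b-a}\int_a^x f(u)\,du$.

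Then I would repeat the same computation for the second term with $\varphi(t)=tx+(1-t)b$ and $\varphi'=x-b$. Integration by parts on $\int_0^1(1-t)f'(tx+(1-t)b)\,dt$ produces the boundary term $-\tfrac{f(b)}{x-b}$ (again the $t=1$ term drops out) plus $\int_0^1 \tfrac{f(tx+(1-t)b)}{x-b}\,dt=\tfrac{1}{(x-b)^2}\int_b^x f(u)\,du$. Multiplying by $\tfrac{(b-x)^2}{b-a}$ and using $(b-x)^2=(x-b)^2$ together with $-\tfrac{(b-x)^2}{x-b}=b-x$, this gives $\tfrac{(b-x)f(b)}{b-a}+\tfrac{1}{b-a}\int_b^x f(u)\,du=\tfrac{(b-x)f(b)}{b-a}-\tfrac{1}{b-a}\int_x^b f(u)\,du$. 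Adding the two contributions, the partial integrals merge into $-\tfrac{1}{b-a}\int_a^b f(u)\,du$ and the boundary pieces into $\tfrac{(b-x)f(b)+(x-a)f(a)}{b-a}$, which is exactly the left-hand side.

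I do not expect a genuine obstacle here: the only delicate points are the sign bookkeeping (notably the term $-\tfrac{f(b)}{x-b}$ and the orientation flip $\int_b^x=-\int_x^b$) and the fact that writing $f(\varphi(t))/\varphi'(t)$ as an antiderivative requires $\varphi'\neq 0$, i.e. $x\in(a,b)$; the endpoint cases $x=a$, $x=b$ are either excluded or recovered at once because the offending prefactor $(x-a)^2$ or $(b-x)^2$ then vanishes. The hypothesis $f'\in L[a,b]$ is used only to justify the integrations by parts and the substitutions.
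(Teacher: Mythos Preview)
Your argument is correct and follows essentially the same route as the paper's own proof: integrate each of the two summands by parts (using $f(\varphi(t))/\varphi'(t)$ as antiderivative), apply the affine substitution to convert the $t$-integrals into $u$-integrals over $[a,x]$ and $[b,x]$, and combine. Your remark about the degenerate cases $x=a,b$ and the sign bookkeeping is a small refinement that the paper omits.
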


Using the Lemma \ref{2.1} the following result can be obtained.

\begin{theorem}
\label{t.2.1} Let $f:I\subseteq 
\mathbb{R}
\rightarrow 
\mathbb{R}
$ be a differentiable mapping on $I^{\circ }$such that $f^{\prime }\in L%
\left[ a,b\right] $, where $a,b\in I$ with $a<b$. If $\left\vert f^{\prime
}\right\vert $ is convex on $\left[ a,b\right] $, then the following
inequality holds:%
\begin{eqnarray*}
&&\left\vert \frac{(b-x)f(b)+(x-a)f(a)}{b-a}-\frac{1}{b-a}%
\int_{a}^{b}f(u)du\right\vert \\
&\leq &\frac{\left( x-a\right) ^{2}}{b-a}\left[ \frac{\left\vert f^{\prime
}(x)\right\vert +2\left\vert f^{\prime }(a)\right\vert }{6}\right] +\frac{%
\left( b-x\right) ^{2}}{b-a}\left[ \frac{\left\vert f^{\prime
}(x)\right\vert +2\left\vert f^{\prime }(b)\right\vert }{6}\right]
\end{eqnarray*}%
for each $x\in \left[ a,b\right] .$
\end{theorem}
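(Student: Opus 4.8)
The proof will follow the standard template for this genre: apply the integral identity of Lemma \ref{2.1}, pass the modulus inside the integrals via the triangle inequality, then bound each integrand using the convexity of $|f'|$.

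First I would invoke Lemma \ref{2.1} to write the left-hand side as
\[
\left\vert \frac{(x-a)^2}{b-a}\int_0^1 (t-1)f'(tx+(1-t)a)\,dt + \frac{(b-x)^2}{b-a}\int_0^1 (1-t)f'(tx+(1-t)b)\,dt\right\vert,
\]
and then apply the triangle inequality to split this into two pieces, each of the form $\frac{(\cdot)^2}{b-a}\int_0^1 |1-t|\,|f'(\cdot)|\,dt$. Since $t\in[0,1]$, $|t-1|=|1-t|=1-t$, so no absolute-value subtleties arise there.

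Next, for the first integral I would use convexity of $|f'|$ to estimate $|f'(tx+(1-t)a)| \le t|f'(x)| + (1-t)|f'(a)|$, and similarly $|f'(tx+(1-t)b)| \le t|f'(x)| + (1-t)|f'(b)|$ for the second. Multiplying through by $(1-t)$ and integrating termwise over $[0,1]$ reduces everything to the two elementary integrals $\int_0^1 t(1-t)\,dt = \tfrac16$ and $\int_0^1 (1-t)^2\,dt = \tfrac13$. The first integral then contributes $\frac{(x-a)^2}{b-a}\left(\tfrac16|f'(x)| + \tfrac13|f'(a)|\right) = \frac{(x-a)^2}{b-a}\cdot\frac{|f'(x)|+2|f'(a)|}{6}$, and the second contributes $\frac{(b-x)^2}{b-a}\cdot\frac{|f'(x)|+2|f'(b)|}{6}$, which is exactly the claimed bound. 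Adding the two pieces completes the proof.

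There is no real obstacle here — the only thing to be careful about is the bookkeeping of the weight $(1-t)$ versus $(t-1)$ in Lemma \ref{2.1}'s identity and making sure the convex combination coefficients are tracked correctly (i.e. that $tx+(1-t)a$ has weight $t$ on $x$, not on $a$). The argument is otherwise a direct computation once Lemma \ref{2.1} is in hand.
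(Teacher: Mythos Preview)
Your proposal is correct and follows essentially the same approach as the paper's own proof: invoke Lemma~\ref{2.1}, take the modulus inside, apply convexity of $|f'|$ to each integrand, and evaluate the resulting elementary integrals $\int_0^1 t(1-t)\,dt=\tfrac16$ and $\int_0^1 (1-t)^2\,dt=\tfrac13$. The bookkeeping you flag (the sign of $t-1$ versus $1-t$ and the convex-combination weights) is handled exactly as in the paper.
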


\begin{proof}
Using Lemma \ref{2.1} and taking the modulus, we have%
\begin{eqnarray*}
&&\left\vert \frac{(b-x)f(b)+(x-a)f(a)}{b-a}-\frac{1}{b-a}%
\int_{a}^{b}f(u)du\right\vert \\
&\leq &\frac{\left( x-a\right) ^{2}}{b-a}\int_{0}^{1}\left( 1-t\right)
\left\vert f^{\prime }(tx+(1-t)a)\right\vert dt \\
&&+\frac{\left( b-x\right) ^{2}}{b-a}\int_{0}^{1}\left( 1-t\right)
\left\vert f^{\prime }(tx+(1-t)b)\right\vert dt.
\end{eqnarray*}%
Since $\left\vert f^{\prime }\right\vert $ is convex, then we get%
\begin{eqnarray*}
&&\left\vert \frac{(b-x)f(b)+(x-a)f(a)}{b-a}-\frac{1}{b-a}%
\int_{a}^{b}f(u)du\right\vert \\
&\leq &\frac{\left( x-a\right) ^{2}}{b-a}\int_{0}^{1}\left( 1-t\right) \left[
t\left\vert f^{\prime }(x)\right\vert +(1-t)\left\vert f^{\prime
}(a)\right\vert \right] dt \\
&&+\frac{\left( b-x\right) ^{2}}{b-a}\int_{0}^{1}\left( 1-t\right) \left[
t\left\vert f^{\prime }(x)\right\vert +(1-t)\left\vert f^{\prime
}(b)\right\vert \right] dt \\
&=&\frac{\left( x-a\right) ^{2}}{b-a}\left[ \frac{\left\vert f^{\prime
}(x)\right\vert +2\left\vert f^{\prime }(a)\right\vert }{6}\right] +\frac{%
\left( b-x\right) ^{2}}{b-a}\left[ \frac{\left\vert f^{\prime
}(x)\right\vert +2\left\vert f^{\prime }(b)\right\vert }{6}\right]
\end{eqnarray*}%
which completes the proof.
\end{proof}

\begin{corollary}
\label{c.2.1} In Theorem \ref{t.2.1}, if we choose $x=\frac{a+b}{2}$, we
obtain%
\begin{equation*}
\left\vert \frac{f(a)+f(b)}{2}-\frac{1}{b-a}\int_{a}^{b}f(u)du\right\vert
\leq \frac{b-a}{12}\left( \left\vert f^{\prime }(a)\right\vert +\left\vert
f^{\prime }\left( \frac{a+b}{2}\right) \right\vert +\left\vert f^{\prime
}(b)\right\vert \right) .
\end{equation*}
\end{corollary}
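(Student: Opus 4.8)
The plan is to specialize Theorem \ref{t.2.1} to the midpoint $x=\frac{a+b}{2}$ and simplify the two sides. First I would record that for this choice of $x$ one has $x-a=b-x=\frac{b-a}{2}$, hence $\left(x-a\right)^{2}=\left(b-x\right)^{2}=\frac{(b-a)^{2}}{4}$. All the remaining work is substitution and bookkeeping.

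Next I would treat the left-hand side of the inequality in Theorem \ref{t.2.1}. The weighted combination $\frac{(b-x)f(b)+(x-a)f(a)}{b-a}$ collapses to $\frac{\frac{b-a}{2}f(b)+\frac{b-a}{2}f(a)}{b-a}=\frac{f(a)+f(b)}{2}$, so the left-hand side becomes exactly $\left\vert \frac{f(a)+f(b)}{2}-\frac{1}{b-a}\int_{a}^{b}f(u)du\right\vert$, as required.

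Then I would simplify the right-hand side. Each of the two bracketed terms carries the common prefactor $\frac{(b-a)^{2}/4}{b-a}=\frac{b-a}{4}$, so the bound reads $\frac{b-a}{4}\cdot\frac{1}{6}\bigl(\left\vert f^{\prime}(\frac{a+b}{2})\right\vert+2\left\vert f^{\prime}(a)\right\vert\bigr)+\frac{b-a}{4}\cdot\frac{1}{6}\bigl(\left\vert f^{\prime}(\frac{a+b}{2})\right\vert+2\left\vert f^{\prime}(b)\right\vert\bigr)$. Adding the two $\left\vert f^{\prime}(\frac{a+b}{2})\right\vert$ contributions and pulling out the factor $\frac{2}{4\cdot 6}=\frac{1}{12}$ gives $\frac{b-a}{12}\bigl(\left\vert f^{\prime}(a)\right\vert+\left\vert f^{\prime}(\frac{a+b}{2})\right\vert+\left\vert f^{\prime}(b)\right\vert\bigr)$, which is the claimed estimate.

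There is no real obstacle: the corollary is a one-line consequence of Theorem \ref{t.2.1}, and the only point needing a little care is the constant $\frac{1}{12}$ arising from $\frac{1}{4}\cdot\frac{1}{6}\cdot 2$. As an optional remark one can note that, since $\left\vert f^{\prime}\right\vert$ is convex, $\left\vert f^{\prime}(\frac{a+b}{2})\right\vert\leq\frac{1}{2}\bigl(\left\vert f^{\prime}(a)\right\vert+\left\vert f^{\prime}(b)\right\vert\bigr)$, so Corollary \ref{c.2.1} in turn yields the weaker bound $\frac{b-a}{8}\bigl(\left\vert f^{\prime}(a)\right\vert+\left\vert f^{\prime}(b)\right\vert\bigr)$, thereby recovering the Dragomir--Agarwal inequality \eqref{1.2} of Theorem \ref{t.1.1}; this comparison is not needed for the proof itself.
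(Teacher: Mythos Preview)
Your proposal is correct and follows exactly the approach intended by the paper: the corollary is stated as an immediate specialization of Theorem \ref{t.2.1} at $x=\frac{a+b}{2}$, and your substitution and arithmetic are accurate. Your final optional remark is precisely the content of Remark \ref{r.2.1} in the paper.
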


\begin{remark}
\label{r.2.1} In Corollary \ref{c.2.1}, using the convexity of $\left\vert
f^{\prime }\right\vert $, we have%
\begin{equation*}
\left\vert \frac{f(a)+f(b)}{2}-\frac{1}{b-a}\int_{a}^{b}f(u)du\right\vert
\leq \frac{b-a}{8}\left( \left\vert f^{\prime }(a)\right\vert +\left\vert
f^{\prime }(b)\right\vert \right)
\end{equation*}%
which is the inequality in (\ref{1.2}).
\end{remark}

\begin{theorem}
\label{t.2.2} Let $f:I\subseteq 
\mathbb{R}
\rightarrow 
\mathbb{R}
$ be a differentiable mapping on $I^{\circ }$such that $f^{\prime }\in L%
\left[ a,b\right] $, where $a,b\in I$ with $a<b$. If $\left\vert f^{\prime
}\right\vert ^{\frac{p}{p-1}}$ is convex on $\left[ a,b\right] $ and for
some fixed $p>1$, then the following inequality holds:%
\begin{eqnarray*}
&&\left\vert \frac{\left( b-x\right) f(b)+(x-a)f(a)}{b-a}-\frac{1}{b-a}%
\int_{a}^{b}f(u)du\right\vert \\
&\leq &\left( \frac{1}{p+1}\right) ^{\frac{1}{p}}\left( \frac{1}{2}\right) ^{%
\frac{1}{q}} \\
&&\times \left[ \frac{\left( x-a\right) ^{2}\left[ \left\vert f^{\prime
}(a)\right\vert ^{q}+\left\vert f^{\prime }(x)\right\vert ^{q}\right] ^{%
\frac{1}{q}}+\left( b-x\right) ^{2}\left[ \left\vert f^{\prime
}(x)\right\vert ^{q}+\left\vert f^{\prime }(b)\right\vert ^{q}\right] ^{%
\frac{1}{q}}}{b-a}\right]
\end{eqnarray*}%
for each $x\in \left[ a,b\right] $ and $q=\frac{p}{p-1}$.
\end{theorem}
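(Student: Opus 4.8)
The plan is to start, exactly as in the proof of Theorem \ref{t.2.1}, from the identity in Lemma \ref{2.1}, take absolute values, and bound the right-hand side by
\begin{equation*}
\frac{(x-a)^{2}}{b-a}\int_{0}^{1}(1-t)\,|f^{\prime}(tx+(1-t)a)|\,dt
+\frac{(b-x)^{2}}{b-a}\int_{0}^{1}(1-t)\,|f^{\prime}(tx+(1-t)b)|\,dt .
\end{equation*}
Here, instead of inserting convexity directly, I would apply H\"older's inequality to each of the two integrals with exponents $p$ and $q=p/(p-1)$, splitting the integrand as $(1-t)\cdot|f^{\prime}(\cdots)|$. This produces the factor $\left(\int_{0}^{1}(1-t)^{p}\,dt\right)^{1/p}=\left(\tfrac{1}{p+1}\right)^{1/p}$ from each term, together with $\left(\int_{0}^{1}|f^{\prime}(tx+(1-t)a)|^{q}\,dt\right)^{1/q}$ and the analogous expression with $b$.

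The next step is to estimate the remaining $L^{q}$-type integrals using the convexity of $|f^{\prime}|^{q}$: since $|f^{\prime}(tx+(1-t)a)|^{q}\le t|f^{\prime}(x)|^{q}+(1-t)|f^{\prime}(a)|^{q}$, integrating over $[0,1]$ gives $\int_{0}^{1}|f^{\prime}(tx+(1-t)a)|^{q}\,dt\le \tfrac{1}{2}\left(|f^{\prime}(x)|^{q}+|f^{\prime}(a)|^{q}\right)$, and similarly for the second integral with $a$ replaced by $b$. Substituting these bounds back, collecting the common factor $\left(\tfrac{1}{p+1}\right)^{1/p}\left(\tfrac{1}{2}\right)^{1/q}$, and dividing by $b-a$ yields precisely the claimed inequality. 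Note that the hypothesis ``$|f^{\prime}|^{p/(p-1)}$ is convex'' is exactly ``$|f^{\prime}|^{q}$ is convex,'' which is what the second step needs.

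I do not expect any genuine obstacle here; the argument is a routine H\"older-plus-convexity estimate. The only point requiring a little care is the bookkeeping of exponents — making sure that $q=p/(p-1)$ is the conjugate exponent so that the powers $1/p$ on the $(1-t)^{p}$ integral and $1/q$ on the convexity bound land exactly as written, and that the squared weights $(x-a)^{2}$ and $(b-x)^{2}$ (rather than their $p$-th powers) are carried through correctly, since they sit outside the integrals. Once the exponents are tracked, the two terms assemble immediately into the stated right-hand side, completing the proof.
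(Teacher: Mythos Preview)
Your plan is correct and matches the paper's proof essentially line for line: Lemma \ref{2.1}, absolute values, H\"older with exponents $p$ and $q$, then the convexity bound $\int_{0}^{1}|f'(tx+(1-t)a)|^{q}\,dt\le\tfrac{1}{2}(|f'(a)|^{q}+|f'(x)|^{q})$. The only cosmetic difference is that the paper names this last estimate as an application of the right-hand Hermite--Hadamard inequality rather than deriving it by integrating the pointwise convexity inequality as you do; the content is identical.
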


\begin{proof}
From Lemma \ref{2.1} and using the well-known H\"{o}lder integral
inequality, we have 
\begin{eqnarray*}
&&\left\vert \frac{\left( b-x\right) f(b)+(x-a)f(a)}{b-a}-\frac{1}{b-a}%
\int_{a}^{b}f(u)du\right\vert \\
&\leq &\frac{\left( x-a\right) ^{2}}{b-a}\int_{0}^{1}\left( 1-t\right)
\left\vert f^{\prime }(tx+(1-t)a)\right\vert dt \\
&&+\frac{\left( b-x\right) ^{2}}{b-a}\int_{0}^{1}\left( 1-t\right)
\left\vert f^{\prime }(tx+(1-t)b)\right\vert dt \\
&\leq &\frac{\left( x-a\right) ^{2}}{b-a}\left( \int_{0}^{1}\left(
1-t\right) ^{p}dt\right) ^{\frac{1}{p}}\left( \int_{0}^{1}\left\vert
f^{\prime }(tx+(1-t)a)\right\vert ^{q}dt\right) ^{\frac{1}{q}} \\
&&+\frac{\left( b-x\right) ^{2}}{b-a}\left( \int_{0}^{1}\left( 1-t\right)
^{p}dt\right) ^{\frac{1}{p}}\left( \int_{0}^{1}\left\vert f^{\prime
}(tx+(1-t)b)\right\vert ^{q}dt\right) ^{\frac{1}{q}}.
\end{eqnarray*}%
Since $\left\vert f^{\prime }\right\vert ^{\frac{p}{p-1}}$ is convex, by the
Hermite-Hadamard's inequality, we have%
\begin{equation*}
\int_{0}^{1}\left\vert f^{\prime }(tx+(1-t)a)\right\vert ^{q}dt\leq \frac{%
\left\vert f^{\prime }(a)\right\vert ^{q}+\left\vert f^{\prime
}(x)\right\vert ^{q}}{2}
\end{equation*}%
and%
\begin{equation*}
\int_{0}^{1}\left\vert f^{\prime }(tx+(1-t)b)\right\vert ^{q}dt\leq \frac{%
\left\vert f^{\prime }(b)\right\vert ^{q}+\left\vert f^{\prime
}(x)\right\vert ^{q}}{2},
\end{equation*}%
so%
\begin{eqnarray*}
&&\left\vert \frac{\left( b-x\right) f(b)+(x-a)f(a)}{b-a}-\frac{1}{b-a}%
\int_{a}^{b}f(u)du\right\vert \\
&\leq &\left( \frac{1}{p+1}\right) ^{\frac{1}{p}}\left( \frac{1}{2}\right) ^{%
\frac{1}{q}} \\
&&\times \left[ \frac{\left( x-a\right) ^{2}\left[ \left\vert f^{\prime
}(a)\right\vert ^{q}+\left\vert f^{\prime }(x)\right\vert ^{q}\right] ^{%
\frac{1}{q}}+\left( b-x\right) ^{2}\left[ \left\vert f^{\prime
}(x)\right\vert ^{q}+\left\vert f^{\prime }(b)\right\vert ^{q}\right] ^{%
\frac{1}{q}}}{b-a}\right]
\end{eqnarray*}%
which completes the proof.
\end{proof}

\begin{corollary}
\label{c.2.2} In Theorem \ref{t.2.2}, if we choose $x=\frac{a+b}{2}$, we
obtain 
\begin{eqnarray*}
&&\left\vert \frac{f(a)+f(b)}{2}-\frac{1}{b-a}\int_{a}^{b}f(u)du\right\vert
\\
&\leq &\frac{b-a}{4}\left( \frac{1}{p+1}\right) ^{\frac{1}{p}}\left( \frac{1%
}{2}\right) ^{\frac{1}{q}} \\
&&\times \left[ \left( \left\vert f^{\prime }(a)\right\vert ^{q}+\left\vert
f^{\prime }\left( \frac{a+b}{2}\right) \right\vert ^{q}\right) ^{\frac{1}{q}%
}+\left( \left\vert f^{\prime }(b)\right\vert ^{q}+\left\vert f^{\prime
}\left( \frac{a+b}{2}\right) \right\vert ^{q}\right) ^{\frac{1}{q}}\right] \\
&\leq &\frac{b-a}{2}\left( \frac{1}{p+1}\right) ^{\frac{1}{p}}\left( \frac{1%
}{2}\right) ^{\frac{1}{q}}\left( \left\vert f^{\prime }(a)\right\vert
+\left\vert f^{\prime }(b)\right\vert \right) .
\end{eqnarray*}%
The second inequality is obtained using the following fact: $%
\sum_{k=1}^{n}\left( a_{k}+b_{k}\right) ^{s}\leq \sum_{k=1}^{n}\left(
a_{k}\right) ^{s}+\sum_{k=1}^{n}\left( b_{k}\right) ^{s}$ for $\left( 0\leq
s<1\right) $, $a_{1},a_{2},a_{3},...,a_{n}\geq 0$ ; $%
b_{1},b_{2},b_{3},...,b_{n}\geq 0$ with $0\leq \frac{p-1}{p}<1$, for $p>1$.
\end{corollary}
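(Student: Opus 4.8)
The plan is to obtain Corollary \ref{c.2.2} as a direct specialization of Theorem \ref{t.2.2} at the midpoint $x=\frac{a+b}{2}$, followed by two elementary estimates that yield the cruder bound. First I would note that $\frac{a+b}{2}\in[a,b]$, so Theorem \ref{t.2.2} applies with this value of $x$. Here $x-a=b-x=\frac{b-a}{2}$, hence $(x-a)^{2}=(b-x)^{2}=\frac{(b-a)^{2}}{4}$ and therefore $\frac{(x-a)^{2}}{b-a}=\frac{(b-x)^{2}}{b-a}=\frac{b-a}{4}$; moreover the left-hand quantity $\frac{(b-x)f(b)+(x-a)f(a)}{b-a}$ collapses to $\frac{f(a)+f(b)}{2}$. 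Substituting these into the inequality of Theorem \ref{t.2.2} gives at once the first displayed inequality of the corollary, with the common factor $\frac{b-a}{4}\left(\frac{1}{p+1}\right)^{1/p}\left(\frac{1}{2}\right)^{1/q}$ multiplying $\left(|f'(a)|^{q}+\left|f'\left(\frac{a+b}{2}\right)\right|^{q}\right)^{1/q}+\left(|f'(b)|^{q}+\left|f'\left(\frac{a+b}{2}\right)\right|^{q}\right)^{1/q}$.

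For the second inequality I would apply the superadditivity inequality $\sum_{k}(a_{k}+b_{k})^{s}\le\sum_{k}a_{k}^{s}+\sum_{k}b_{k}^{s}$, valid for $0\le s<1$ and nonnegative entries, with exponent $s=\frac{1}{q}=\frac{p-1}{p}$ (which lies in $[0,1)$ since $p>1$), applied to each of the two bracketed $q$-th roots. This replaces the first root by $|f'(a)|+\left|f'\left(\frac{a+b}{2}\right)\right|$ and the second by $|f'(b)|+\left|f'\left(\frac{a+b}{2}\right)\right|$, so the bracket is bounded by $|f'(a)|+|f'(b)|+2\left|f'\left(\frac{a+b}{2}\right)\right|$. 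Using the convexity of $|f'|$ to estimate $\left|f'\left(\frac{a+b}{2}\right)\right|\le\frac{|f'(a)|+|f'(b)|}{2}$, this is at most $2\bigl(|f'(a)|+|f'(b)|\bigr)$; absorbing the factor $2$ into $\frac{b-a}{4}$ produces the claimed prefactor $\frac{b-a}{2}\left(\frac{1}{p+1}\right)^{1/p}\left(\frac{1}{2}\right)^{1/q}$.

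There is essentially no serious obstacle here: the whole argument is one substitution followed by two textbook inequalities. The one point that warrants care is the last estimate of the midpoint value, since Theorem \ref{t.2.2} assumes only that $|f'|^{q}$ is convex rather than $|f'|$ itself; one should therefore either retain the $\left|f'\left(\frac{a+b}{2}\right)\right|$ term in the stated conclusion or invoke convexity of $|f'|$ for that last step, in the same spirit as Remark \ref{r.2.1}.
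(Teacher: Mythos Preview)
Your proposal is correct and follows exactly the paper's approach: substitute $x=\frac{a+b}{2}$ into Theorem~\ref{t.2.2} for the first inequality, then invoke the subadditivity fact $\sum_k (a_k+b_k)^s\le\sum_k a_k^s+\sum_k b_k^s$ with $s=1/q$ for the second. You are in fact more careful than the paper, which cites only the subadditivity inequality; as you observe, that step alone yields $|f'(a)|+|f'(b)|+2\bigl|f'(\tfrac{a+b}{2})\bigr|$, and one still needs the midpoint bound $\bigl|f'(\tfrac{a+b}{2})\bigr|\le\tfrac12(|f'(a)|+|f'(b)|)$ to reach the stated constant---a point the paper leaves implicit and which, as you rightly note, strictly requires convexity of $|f'|$ rather than of $|f'|^{q}$.
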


\begin{theorem}
\label{t.2.3} Let $f:I\subseteq 
\mathbb{R}
\rightarrow 
\mathbb{R}
$ be a differentiable mapping on $I^{\circ }$such that $f^{\prime }\in L%
\left[ a,b\right] $, where $a,b\in I$ with $a<b$. If $\left\vert f^{\prime
}\right\vert ^{q}$ is concave on $\left[ a,b\right] $, for some fixed $q>1$,
then the following inequality holds:%
\begin{eqnarray*}
&&\left\vert \frac{\left( b-x\right) f(b)+(x-a)f(a)}{b-a}-\frac{1}{b-a}%
\int_{a}^{b}f(u)du\right\vert \\
&\leq &\left[ \frac{q-1}{2q-1}\right] ^{\frac{q-1}{q}}\left[ \frac{\left(
x-a\right) ^{2}\left\vert f^{\prime }\left( \frac{a+x}{2}\right) \right\vert
+\left( b-x\right) ^{2}\left\vert f^{\prime }\left( \frac{b+x}{2}\right)
\right\vert }{b-a}\right]
\end{eqnarray*}%
for each $x\in \left[ a,b\right] $.
\end{theorem}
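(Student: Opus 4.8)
The plan is to imitate the proof of Theorem~\ref{t.2.2}, but to replace the appeal to the Hermite--Hadamard inequality (which was available there because $|f'|^{p/(p-1)}$ was convex) by Jensen's integral inequality, which is the tool suited to a \emph{concave} integrand. First I would apply Lemma~\ref{2.1}, take absolute values, and use $|t-1|=1-t$ on $[0,1]$ to get
\begin{eqnarray*}
&&\left\vert \frac{(b-x)f(b)+(x-a)f(a)}{b-a}-\frac{1}{b-a}\int_{a}^{b}f(u)\,du\right\vert \\
&\leq &\frac{(x-a)^{2}}{b-a}\int_{0}^{1}(1-t)\left\vert f^{\prime }(tx+(1-t)a)\right\vert dt+\frac{(b-x)^{2}}{b-a}\int_{0}^{1}(1-t)\left\vert f^{\prime }(tx+(1-t)b)\right\vert dt.
\end{eqnarray*}

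Next, set $p=\frac{q}{q-1}$, so that $p>1$ (because $q>1$) and $\frac1p+\frac1q=1$. Applying H\"older's inequality to each of the two integrals, with the factor $1-t$ carrying the exponent $p$ and $|f'|$ the exponent $q$, gives
\begin{equation*}
\int_{0}^{1}(1-t)\left\vert f^{\prime }(tx+(1-t)a)\right\vert dt\leq \left( \int_{0}^{1}(1-t)^{p}dt\right) ^{\frac1p}\left( \int_{0}^{1}\left\vert f^{\prime }(tx+(1-t)a)\right\vert ^{q}dt\right) ^{\frac1q},
\end{equation*}
and similarly for the integral involving $b$. Since $\int_{0}^{1}(1-t)^{p}dt=\frac{1}{p+1}=\frac{q-1}{2q-1}$ and $\frac1p=\frac{q-1}{q}$, the first factor equals $\left[\frac{q-1}{2q-1}\right]^{\frac{q-1}{q}}$ in both terms.

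Finally, I would exploit the concavity of $\left\vert f^{\prime }\right\vert ^{q}$ on $[a,b]$. Because $t\mapsto tx+(1-t)a$ maps $[0,1]$ into the subinterval with endpoints $a$ and $x$, on which $\left\vert f^{\prime }\right\vert ^{q}$ is concave (hence continuous, so that the integrals are meaningful), Jensen's integral inequality yields
\begin{equation*}
\int_{0}^{1}\left\vert f^{\prime }(tx+(1-t)a)\right\vert ^{q}dt\leq \left\vert f^{\prime }\left( \int_{0}^{1}(tx+(1-t)a)\,dt\right) \right\vert ^{q}=\left\vert f^{\prime }\left( \frac{a+x}{2}\right) \right\vert ^{q},
\end{equation*}
and likewise $\int_{0}^{1}\left\vert f^{\prime }(tx+(1-t)b)\right\vert ^{q}dt\leq \left\vert f^{\prime }\left( \frac{b+x}{2}\right) \right\vert ^{q}$. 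Taking $q$th roots and inserting these two bounds, together with the value of the H\"older constant, into the inequality from the first step produces exactly the claimed estimate. I do not anticipate a genuine obstacle: the only points requiring a moment's care are the exponent bookkeeping $\frac{1}{p+1}=\frac{q-1}{2q-1}$ and the justification of Jensen's inequality, for which it is enough to recall that a concave function is continuous on the interior of its interval and that the barycentre $\int_{0}^{1}(tx+(1-t)a)\,dt=\frac{a+x}{2}$ lies in $[a,b]$.
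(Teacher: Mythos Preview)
Your argument is correct and follows essentially the same route as the paper's own proof: Lemma~\ref{2.1} plus absolute values, H\"older with $p=\tfrac{q}{q-1}$ (yielding the constant $\left[\tfrac{q-1}{2q-1}\right]^{(q-1)/q}$), and Jensen's inequality applied to the concave function $|f'|^{q}$ to bound each integral by the value at the barycentre $\tfrac{a+x}{2}$ (respectively $\tfrac{b+x}{2}$). The paper records Jensen with the trivial weight $t^{0}$, but this is exactly what you wrote.
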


\begin{proof}
As in Theorem \ref{t.2.2}, using Lemma \ref{2.1} and the well-known H\"{o}%
lder integral inequality for $q>1$ and $p=\frac{q}{q-1}$, we have 
\begin{eqnarray*}
&&\left\vert \frac{\left( b-x\right) f(b)+(x-a)f(a)}{b-a}-\frac{1}{b-a}%
\int_{a}^{b}f(u)du\right\vert \\
&\leq &\frac{\left( x-a\right) ^{2}}{b-a}\int_{0}^{1}\left( 1-t\right)
\left\vert f^{\prime }(tx+(1-t)a)\right\vert dt \\
&&+\frac{\left( b-x\right) ^{2}}{b-a}\int_{0}^{1}\left( 1-t\right)
\left\vert f^{\prime }(tx+(1-t)b)\right\vert dt \\
&\leq &\frac{\left( x-a\right) ^{2}}{b-a}\left( \int_{0}^{1}\left(
1-t\right) ^{\frac{q}{q-1}}dt\right) ^{\frac{q-1}{q}}\left(
\int_{0}^{1}\left\vert f^{\prime }(tx+(1-t)a)\right\vert ^{q}dt\right) ^{%
\frac{1}{q}} \\
&&+\frac{\left( b-x\right) ^{2}}{b-a}\left( \int_{0}^{1}\left( 1-t\right) ^{%
\frac{q}{q-1}}dt\right) ^{\frac{q-1}{q}}\left( \int_{0}^{1}\left\vert
f^{\prime }(tx+(1-t)b)\right\vert ^{q}dt\right) ^{\frac{1}{q}}.
\end{eqnarray*}%
Since $\left\vert f^{\prime }\right\vert ^{q}$ is concave on $\left[ a,b%
\right] $, we can use the Jensen's integral inequality to obtain:%
\begin{eqnarray*}
\int_{0}^{1}\left\vert f^{\prime }\left( tx+\left( 1-t\right) a\right)
\right\vert ^{q}dt &=&\int_{0}^{1}t^{0}\left\vert f^{\prime }\left(
tx+\left( 1-t\right) a\right) \right\vert ^{q}dt \\
&\leq &\left( \int_{0}^{1}t^{0}dt\right) \left\vert f^{\prime }\left( \frac{1%
}{\int_{0}^{1}t^{0}dt}\int_{0}^{1}\left( tx+\left( 1-t\right) a\right)
dt\right) \right\vert ^{q} \\
&=&\left\vert f^{\prime }\left( \frac{a+x}{2}\right) \right\vert ^{q}.
\end{eqnarray*}
Analogously,%
\begin{equation*}
\int_{0}^{1}\left\vert f^{\prime }\left( tx+\left( 1-t\right) b\right)
\right\vert ^{q}dt\leq \left\vert f^{\prime }\left( \frac{b+x}{2}\right)
\right\vert ^{q}.
\end{equation*}%
Combining all the obtained inequalities, we get 
\begin{eqnarray*}
&&\left\vert \frac{\left( b-x\right) f(b)+(x-a)f(a)}{b-a}-\frac{1}{b-a}%
\int_{a}^{b}f(u)du\right\vert \\
&\leq &\left[ \frac{q-1}{2q-1}\right] ^{\frac{q-1}{q}}\left[ \frac{\left(
x-a\right) ^{2}\left\vert f^{\prime }\left( \frac{a+x}{2}\right) \right\vert
+\left( b-x\right) ^{2}\left\vert f^{\prime }\left( \frac{b+x}{2}\right)
\right\vert }{b-a}\right]
\end{eqnarray*}%
which completes the proof.
\end{proof}

\begin{remark}
\label{r.2.2} In Theorem \ref{t.2.3}, if we choose $x=\frac{a+b}{2}$, we
have 
\begin{eqnarray*}
&&\left\vert \frac{f(a)+f(b)}{2}-\frac{1}{b-a}\int_{a}^{b}f(u)du\right\vert
\\
&\leq &\left[ \frac{q-1}{2q-1}\right] ^{\frac{q-1}{q}}\left( \frac{b-a}{4}%
\right) \left( \left\vert f^{\prime }\left( \frac{3a+b}{4}\right)
\right\vert +\left\vert f^{\prime }\left( \frac{a+3b}{4}\right) \right\vert
\right)
\end{eqnarray*}%
which is the inequality in (\ref{1.3}).
\end{remark}

\begin{theorem}
\label{t.2.4} Let $f:I\subseteq 
\mathbb{R}
\rightarrow 
\mathbb{R}
$ be a differentiable mapping on $I^{\circ }$such that $f^{\prime }\in L%
\left[ a,b\right] $, where $a,b\in I$ with $a<b$. If $\left\vert f^{\prime
}\right\vert ^{q}$ is convex on $\left[ a,b\right] $, for some fixed $q\geq
1 $, then the following inequality holds:%
\begin{eqnarray*}
&&\left\vert \frac{\left( b-x\right) f(b)+(x-a)f(a)}{b-a}-\frac{1}{b-a}%
\int_{a}^{b}f(u)du\right\vert \\
&\leq &\frac{1}{2}\left( \frac{1}{3}\right) ^{\frac{1}{q}}\left[ \frac{%
\left( x-a\right) ^{2}\left[ \left\vert f^{\prime }\left( x\right)
\right\vert ^{q}+2\left\vert f^{\prime }\left( a\right) \right\vert ^{q}%
\right] ^{\frac{1}{q}}+\left( b-x\right) ^{2}\left[ \left\vert f^{\prime
}\left( x\right) \right\vert ^{q}+2\left\vert f^{\prime }\left( b\right)
\right\vert ^{q}\right] ^{\frac{1}{q}}}{b-a}\right]
\end{eqnarray*}%
for each $x\in \left[ a,b\right] $.
\end{theorem}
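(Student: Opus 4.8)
The plan is to follow the pattern of the proofs of Theorems~\ref{t.2.2} and \ref{t.2.3}, but to use the power-mean inequality in place of H\"older's inequality, since here the hypothesis is that $\left\vert f^{\prime}\right\vert^{q}$ itself is convex for $q\geq 1$. Starting from Lemma~\ref{2.1} and passing to the modulus, one obtains, exactly as before,
\[
\left\vert \frac{(b-x)f(b)+(x-a)f(a)}{b-a}-\frac{1}{b-a}\int_{a}^{b}f(u)\,du\right\vert \leq \frac{(x-a)^{2}}{b-a}\int_{0}^{1}(1-t)\left\vert f^{\prime}(tx+(1-t)a)\right\vert dt+\frac{(b-x)^{2}}{b-a}\int_{0}^{1}(1-t)\left\vert f^{\prime}(tx+(1-t)b)\right\vert dt.
\]

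Next I would apply the power-mean inequality to each integral, splitting the weight as $(1-t)=(1-t)^{1-1/q}(1-t)^{1/q}$ and using $\bigl(\int_{0}^{1}(1-t)\,dt\bigr)^{1-1/q}=(1/2)^{1-1/q}$. This leaves the two integrals $\int_{0}^{1}(1-t)\left\vert f^{\prime}(tx+(1-t)a)\right\vert^{q}dt$ and its analogue with $b$ to be estimated. Invoking the convexity of $\left\vert f^{\prime}\right\vert^{q}$, namely $\left\vert f^{\prime}(tx+(1-t)a)\right\vert^{q}\leq t\left\vert f^{\prime}(x)\right\vert^{q}+(1-t)\left\vert f^{\prime}(a)\right\vert^{q}$, together with $\int_{0}^{1}t(1-t)\,dt=\tfrac16$ and $\int_{0}^{1}(1-t)^{2}\,dt=\tfrac13$, gives $\int_{0}^{1}(1-t)\left\vert f^{\prime}(tx+(1-t)a)\right\vert^{q}dt\leq \tfrac16\bigl(\left\vert f^{\prime}(x)\right\vert^{q}+2\left\vert f^{\prime}(a)\right\vert^{q}\bigr)$, and likewise with $b$ replacing $a$.

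Putting these together, the first term on the right is bounded by $\dfrac{(x-a)^{2}}{b-a}(1/2)^{1-1/q}\bigl(\tfrac16(\left\vert f^{\prime}(x)\right\vert^{q}+2\left\vert f^{\prime}(a)\right\vert^{q})\bigr)^{1/q}$, and similarly for the second. The only remaining work is the constant bookkeeping: since $(1/2)^{1-1/q}(1/6)^{1/q}=(1/2)^{1-1/q}(1/2)^{1/q}(1/3)^{1/q}=\tfrac12(1/3)^{1/q}$, the factor $\tfrac12(1/3)^{1/q}$ emerges and summing the two terms yields precisely the claimed inequality. I expect no genuine obstacle here; the only points requiring care are the correct splitting of the weight $(1-t)$ in the power-mean step and the arithmetic of the exponents, and as a consistency check one sees that the case $q=1$ reduces to Theorem~\ref{t.2.1}.
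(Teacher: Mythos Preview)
Your proposal is correct and follows essentially the same route as the paper's proof: start from Lemma~\ref{2.1}, apply the power-mean inequality with weight $(1-t)$ to get the factor $(1/2)^{1-1/q}$, use the convexity of $|f'|^{q}$ to bound the weighted $q$-th power integrals by $\tfrac{1}{6}\bigl(|f'(x)|^{q}+2|f'(a)|^{q}\bigr)$ (and the analogue with $b$), and then combine constants. Your explicit check that $(1/2)^{1-1/q}(1/6)^{1/q}=\tfrac{1}{2}(1/3)^{1/q}$ is exactly the bookkeeping needed, and the paper proceeds identically.
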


\begin{proof}
Suppose that $q\geq 1$. From Lemma \ref{2.1} and using the well-known
power-mean inequality, we have 
\begin{eqnarray*}
&&\left\vert \frac{\left( b-x\right) f(b)+(x-a)f(a)}{b-a}-\frac{1}{b-a}%
\int_{a}^{b}f(u)du\right\vert  \\
&\leq &\frac{\left( x-a\right) ^{2}}{b-a}\int_{0}^{1}\left( 1-t\right)
\left\vert f^{\prime }(tx+(1-t)a)\right\vert dt \\
&&+\frac{\left( b-x\right) ^{2}}{b-a}\int_{0}^{1}\left( 1-t\right)
\left\vert f^{\prime }(tx+(1-t)b)\right\vert dt \\
&\leq &\frac{\left( x-a\right) ^{2}}{b-a}\left( \int_{0}^{1}\left(
1-t\right) dt\right) ^{1-\frac{1}{q}}\left( \int_{0}^{1}\left( 1-t\right)
\left\vert f^{\prime }(tx+(1-t)a)\right\vert ^{q}dt\right) ^{\frac{1}{q}} \\
&&+\frac{\left( b-x\right) ^{2}}{b-a}\left( \int_{0}^{1}\left( 1-t\right)
dt\right) ^{1-\frac{1}{q}}\left( \int_{0}^{1}\left( 1-t\right) \left\vert
f^{\prime }(tx+(1-t)b)\right\vert ^{q}dt\right) ^{\frac{1}{q}}.
\end{eqnarray*}%
Since $\left\vert f^{\prime }\right\vert ^{q}$ is convex, therefore we have%
\begin{eqnarray*}
&&\int_{0}^{1}\left( 1-t\right) \left\vert f^{\prime }(tx+(1-t)a)\right\vert
^{q}dt \\
&\leq &\int_{0}^{1}\left( 1-t\right) \left[ t\left\vert f^{\prime }\left(
x\right) \right\vert ^{q}+\left( 1-t\right) \left\vert f^{\prime }\left(
a\right) \right\vert ^{q}\right] dt \\
&=&\frac{\left\vert f^{\prime }\left( x\right) \right\vert ^{q}+2\left\vert
f^{\prime }\left( a\right) \right\vert ^{q}}{6}
\end{eqnarray*}%
Analogously,%
\begin{equation*}
\int_{0}^{1}\left( 1-t\right) \left\vert f^{\prime }(tx+(1-t)b)\right\vert
^{q}dt\leq \frac{\left\vert f^{\prime }\left( x\right) \right\vert
^{q}+2\left\vert f^{\prime }\left( b\right) \right\vert ^{q}}{6}.
\end{equation*}%
Combining all the above inequalities gives the desired result.
\end{proof}

\begin{corollary}
\label{c.2.3} In Theorem \ref{t.2.4}, choosing $x=\frac{a+b}{2}$ and then
using the convexity of $\left\vert f^{\prime }\right\vert ^{q}$, we have%
\begin{eqnarray*}
&&\left\vert \frac{f(a)+f(b)}{2}-\frac{1}{b-a}\int_{a}^{b}f(u)du\right\vert
\\
&\leq &\left( \frac{b-a}{8}\right) \left( \frac{1}{3}\right) ^{\frac{1}{q}}%
\left[ \left( 2\left\vert f^{\prime }(a)\right\vert ^{q}+\left\vert
f^{\prime }\left( \frac{a+b}{2}\right) \right\vert ^{q}\right) ^{\frac{1}{q}%
}+\left( 2\left\vert f^{\prime }(b)\right\vert ^{q}+\left\vert f^{\prime
}\left( \frac{a+b}{2}\right) \right\vert ^{q}\right) ^{\frac{1}{q}}\right] \\
&\leq &\left( \frac{3^{1-\frac{1}{q}}}{8}\right) \left( b-a\right) \left(
\left\vert f^{\prime }\left( a\right) \right\vert +\left\vert f^{\prime
}\left( b\right) \right\vert \right) .
\end{eqnarray*}
\end{corollary}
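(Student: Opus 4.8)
\emph{Proof proposal for Corollary \ref{c.2.3}.}

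The plan is to deduce both inequalities straight from Theorem \ref{t.2.4} by specialising to the midpoint and then carrying out a few elementary estimates. First I would put $x=\frac{a+b}{2}$ in Theorem \ref{t.2.4}. Then $x-a=b-x=\frac{b-a}{2}$, so $(x-a)^{2}=(b-x)^{2}=\frac{(b-a)^{2}}{4}$, while $\frac{(b-x)f(b)+(x-a)f(a)}{b-a}=\frac{f(a)+f(b)}{2}$, so the left-hand side collapses to $\left|\frac{f(a)+f(b)}{2}-\frac{1}{b-a}\int_{a}^{b}f(u)\,du\right|$. Pulling the common factor $\frac{(b-a)^{2}}{4}$ out of the bracket and dividing by $b-a$ leaves $\frac{b-a}{4}$, which I absorb into the prefactor $\frac{1}{2}\left(\frac{1}{3}\right)^{1/q}$ to get $\frac{b-a}{8}\left(\frac{1}{3}\right)^{1/q}$; this produces the first claimed inequality verbatim.

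For the second inequality, write $A=|f'(a)|$, $B=|f'(b)|$ and $C=|f'(\frac{a+b}{2})|$; the convexity of $|f'|^{q}$ gives $C^{q}\le\frac{A^{q}+B^{q}}{2}$, and after cancelling $\frac{b-a}{8}$ and multiplying through by $3^{1/q}$ it suffices to show
\[
\left(2A^{q}+C^{q}\right)^{1/q}+\left(2B^{q}+C^{q}\right)^{1/q}\le 3\,(A+B).
\]
I would estimate each $q$-th root with the elementary subadditivity already invoked in Corollary \ref{c.2.2}, namely $(\alpha+\beta)^{s}\le\alpha^{s}+\beta^{s}$ for $0\le s<1$, taken with $s=\frac{1}{q}$: this yields $\left(2A^{q}+C^{q}\right)^{1/q}\le 2^{1/q}A+C$ and the analogous bound for the second term, so the left side is at most $2^{1/q}(A+B)+2C$. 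Bounding $C\le\left(\frac{A^{q}+B^{q}}{2}\right)^{1/q}$ by convexity and then $(A^{q}+B^{q})^{1/q}\le A+B$ (the same subadditivity fact once more) gives $2C\le 2^{1-1/q}(A+B)$, so the left side is at most $\left(2^{1/q}+2^{1-1/q}\right)(A+B)$.

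It then remains to observe that $2^{1/q}+2^{1-1/q}\le 3$ for every $q\ge 1$: writing $u=2^{1/q}\in[1,2]$ this is $u+\frac{2}{u}\le 3$, i.e.\ $(u-1)(u-2)\le 0$, which is immediate. Hence $\left(2A^{q}+C^{q}\right)^{1/q}+\left(2B^{q}+C^{q}\right)^{1/q}\le 3(A+B)$, and multiplying back by $\frac{b-a}{8}\left(\frac{1}{3}\right)^{1/q}$ turns the factor $3$ into $3^{1-1/q}$, giving the second inequality. I expect the only delicate point to be the bookkeeping of the constants — checking that the powers of $2$ and $3$ recombine to exactly $\frac{3^{1-1/q}}{8}$ — together with the elementary fact $(u-1)(u-2)\le 0$; everything else is a routine substitution. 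A slightly sharper but less clean variant would substitute the convexity bound $2A^{q}+C^{q}\le\frac{5A^{q}+B^{q}}{2}$ before applying subadditivity.
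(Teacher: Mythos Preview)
Your argument is correct. The paper gives no separate proof of this corollary beyond the hint in its statement (substitute $x=\frac{a+b}{2}$ in Theorem~\ref{t.2.4} and use the convexity of $|f'|^{q}$), and your derivation carries out exactly that program: the first inequality is the straightforward midpoint substitution, and for the second you combine the subadditivity $(\alpha+\beta)^{1/q}\le\alpha^{1/q}+\beta^{1/q}$ (as in Corollary~\ref{c.2.2}) with the convexity bound $|f'(\tfrac{a+b}{2})|^{q}\le\tfrac12(|f'(a)|^{q}+|f'(b)|^{q})$ and the elementary estimate $2^{1/q}+2^{1-1/q}\le 3$.

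One small remark on your bookkeeping: it matters that you group $2A^{q}$ as a single block before applying subadditivity, obtaining $(2A^{q}+C^{q})^{1/q}\le 2^{1/q}A+C$. Had you split it as $A^{q}+A^{q}+C^{q}$ and used three-term subadditivity to get $2A+C$, the resulting constant $2+2^{1-1/q}$ would exceed $3$ for $q>1$ and the argument would fail. Your version avoids this, and the closing check $(u-1)(u-2)\le 0$ for $u=2^{1/q}\in[1,2]$ is exactly what is needed.
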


\begin{theorem}
\label{t.2.5} Let $f:I\subseteq 
\mathbb{R}
\rightarrow 
\mathbb{R}
$ be a differentiable mapping on $I^{\circ }$such that $f^{\prime }\in L%
\left[ a,b\right] $, where $a,b\in I$ with $a<b$. If $\left\vert f^{\prime
}\right\vert ^{q}$ is concave on $\left[ a,b\right] $, for some fixed $q\geq
1$, then the following inequality holds:%
\begin{eqnarray*}
&&\left\vert \frac{\left( b-x\right) f(b)+(x-a)f(a)}{b-a}-\frac{1}{b-a}%
\int_{a}^{b}f(u)du\right\vert \\
&\leq &\frac{1}{2}\left[ \frac{\left( x-a\right) ^{2}\left\vert f^{\prime
}\left( \frac{x+2a}{3}\right) \right\vert +\left( b-x\right) ^{2}\left\vert
f^{\prime }\left( \frac{x+2b}{3}\right) \right\vert }{b-a}\right] .
\end{eqnarray*}
\end{theorem}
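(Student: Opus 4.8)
The plan is to mirror the proof of Theorem \ref{t.2.4}, replacing the use of convexity by Jensen's integral inequality for concave functions. First I would start from Lemma \ref{2.1}, pass to absolute values, and then apply the power-mean inequality to each of the two integrals exactly as in Theorem \ref{t.2.4}, obtaining
\begin{eqnarray*}
&&\left\vert \frac{\left( b-x\right) f(b)+(x-a)f(a)}{b-a}-\frac{1}{b-a}\int_{a}^{b}f(u)du\right\vert \\
&\leq &\frac{\left( x-a\right) ^{2}}{b-a}\left( \int_{0}^{1}\left( 1-t\right) dt\right) ^{1-\frac{1}{q}}\left( \int_{0}^{1}\left( 1-t\right) \left\vert f^{\prime }(tx+(1-t)a)\right\vert ^{q}dt\right) ^{\frac{1}{q}} \\
&&+\frac{\left( b-x\right) ^{2}}{b-a}\left( \int_{0}^{1}\left( 1-t\right) dt\right) ^{1-\frac{1}{q}}\left( \int_{0}^{1}\left( 1-t\right) \left\vert f^{\prime }(tx+(1-t)b)\right\vert ^{q}dt\right) ^{\frac{1}{q}}.
\end{eqnarray*}
(For $q=1$ this power-mean step is a trivial identity, so the argument still goes through.)

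The key step is then to bound the two weighted integrals $\int_{0}^{1}(1-t)\left\vert f^{\prime }(\cdot )\right\vert ^{q}dt$ using concavity. Since $\left\vert f^{\prime }\right\vert ^{q}$ is concave on $[a,b]$, Jensen's integral inequality with the nonnegative weight $w(t)=1-t$ on $[0,1]$ (which has $\int_{0}^{1}w=\tfrac12>0$) gives
\begin{equation*}
\int_{0}^{1}(1-t)\left\vert f^{\prime }(tx+(1-t)a)\right\vert ^{q}dt\leq \left( \int_{0}^{1}(1-t)\,dt\right) \left\vert f^{\prime }\left( \frac{\int_{0}^{1}(1-t)(tx+(1-t)a)\,dt}{\int_{0}^{1}(1-t)\,dt}\right) \right\vert ^{q}.
\end{equation*}
Now $\int_{0}^{1}(1-t)\,dt=\tfrac12$ and an elementary computation gives $\int_{0}^{1}(1-t)(tx+(1-t)a)\,dt=\tfrac{x}{6}+\tfrac{a}{3}$, so the inner argument equals $\tfrac{x+2a}{3}$, which is a convex combination of $x$ and $a$ and hence lies in $[a,b]$; therefore
\begin{equation*}
\int_{0}^{1}(1-t)\left\vert f^{\prime }(tx+(1-t)a)\right\vert ^{q}dt\leq \tfrac12\left\vert f^{\prime }\left( \tfrac{x+2a}{3}\right) \right\vert ^{q},
\end{equation*}
and in the same way $\int_{0}^{1}(1-t)\left\vert f^{\prime }(tx+(1-t)b)\right\vert ^{q}dt\leq \tfrac12\left\vert f^{\prime }\left( \tfrac{x+2b}{3}\right) \right\vert ^{q}$, with barycenter $\tfrac{x+2b}{3}\in[a,b]$.

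Finally I would substitute these back into the displayed bound: each bracketed factor becomes $(\tfrac12)^{1-1/q}\bigl(\tfrac12\left\vert f^{\prime }(\tfrac{x+2a}{3})\right\vert ^{q}\bigr)^{1/q}=\tfrac12\left\vert f^{\prime }(\tfrac{x+2a}{3})\right\vert$, the two powers of $\tfrac12$ combining to $\tfrac12$ — which is exactly why $q$ disappears from the final estimate — and analogously for the other term, giving the claimed inequality. I do not expect a genuine obstacle here; the only points requiring care are (i) justifying that Jensen applies, i.e. that $\left\vert f^{\prime }\right\vert ^{q}$ is concave on an interval containing all the relevant points (this is the hypothesis, together with the check that the barycenters $\tfrac{x+2a}{3},\tfrac{x+2b}{3}$ remain in $[a,b]$), and (ii) the routine evaluation of $\int_{0}^{1}(1-t)(tx+(1-t)a)\,dt$. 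An equivalent route: since $q\geq1$ and $s\mapsto s^{1/q}$ is concave and nondecreasing, $\left\vert f^{\prime }\right\vert$ is itself concave, so one may apply Jensen directly to $\left\vert f^{\prime }\right\vert$ with weight $1-t$ and skip the power-mean step altogether.
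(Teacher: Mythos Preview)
Your proof is correct and essentially the same as the paper's. The only organizational difference is that the paper uses precisely the ``equivalent route'' you mention at the end: it first observes that concavity of $|f'|^{q}$ together with the power-mean inequality forces $|f'|$ itself to be concave, and then applies Jensen's integral inequality with weight $1-t$ directly to $|f'|$ (so the power-mean step is never written); your main route applies power-mean first and then Jensen to $|f'|^{q}$, but the barycenter computation and the final constant $\tfrac12$ are identical.
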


\begin{proof}
First, we note that by the concavity of $\left\vert f^{\prime }\right\vert
^{q}$ and the power-mean inequality, we have%
\begin{equation*}
\left\vert f^{\prime }\left( tx+\left( 1-t\right) a\right) \right\vert
^{q}\geq t\left\vert f^{\prime }(x)\right\vert ^{q}+\left( 1-t\right)
\left\vert f^{\prime }\left( a\right) \right\vert ^{q}.
\end{equation*}%
Hence,%
\begin{equation*}
\left\vert f^{\prime }\left( tx+\left( 1-t\right) a\right) \right\vert \geq
t\left\vert f^{\prime }(x)\right\vert +\left( 1-t\right) \left\vert
f^{\prime }\left( a\right) \right\vert ,
\end{equation*}%
so $\left\vert f^{\prime }\right\vert $ is also concave.

Accordingly, using Lemma \ref{2.1} and the Jensen integral inequality, we
have 
\begin{eqnarray*}
&&\left\vert \frac{\left( b-x\right) f(b)+(x-a)f(a)}{b-a}-\frac{1}{b-a}%
\int_{a}^{b}f(u)du\right\vert \\
&\leq &\frac{\left( x-a\right) ^{2}}{b-a}\int_{0}^{1}\left( 1-t\right)
\left\vert f^{\prime }(tx+(1-t)a)\right\vert dt \\
&&+\frac{\left( b-x\right) ^{2}}{b-a}\int_{0}^{1}\left( 1-t\right)
\left\vert f^{\prime }(tx+(1-t)b)\right\vert dt \\
&\leq &\frac{\left( x-a\right) ^{2}}{b-a}\left( \int_{0}^{1}\left(
1-t\right) dt\right) \left\vert f^{\prime }\left( \frac{\int_{0}^{1}\left(
1-t\right) \left( tx+\left( 1-t\right) a\right) dt}{\int_{0}^{1}\left(
1-t\right) dt}\right) \right\vert \\
&&+\frac{\left( b-x\right) ^{2}}{b-a}\left( \int_{0}^{1}\left( 1-t\right)
dt\right) \left\vert f^{\prime }\left( \frac{\int_{0}^{1}\left( 1-t\right)
\left( tx+\left( 1-t\right) b\right) dt}{\int_{0}^{1}\left( 1-t\right) dt}%
\right) \right\vert \\
&\leq &\frac{1}{2}\left[ \frac{\left( x-a\right) ^{2}\left\vert f^{\prime
}\left( \frac{x+2a}{3}\right) \right\vert +\left( b-x\right) ^{2}\left\vert
f^{\prime }\left( \frac{x+2b}{3}\right) \right\vert }{b-a}\right] .
\end{eqnarray*}
\end{proof}

\begin{corollary}
\label{c.2.4} In Theorem \ref{t.2.5}, if we choose $x=\frac{a+b}{2}$, we have%
\begin{eqnarray*}
&&\left\vert \frac{f(a)+f(b)}{2}-\frac{1}{b-a}\int_{a}^{b}f(u)du\right\vert
\\
&\leq &\frac{b-a}{8}\left[ \left\vert f^{\prime }\left( \frac{5a+b}{6}%
\right) \right\vert +\left\vert f^{\prime }\left( \frac{a+5b}{6}\right)
\right\vert \right] .
\end{eqnarray*}
\end{corollary}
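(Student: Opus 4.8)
The plan is to obtain this corollary as a direct specialization of Theorem~\ref{t.2.5} by setting $x=\frac{a+b}{2}$; there is no new analytic content, only the substitution and the attendant arithmetic. First I would invoke the bound of Theorem~\ref{t.2.5}, whose hypotheses ($f$ differentiable on $I^{\circ}$ with $f'\in L[a,b]$ and $\left\vert f'\right\vert^{q}$ concave for some fixed $q\ge 1$) are exactly those assumed here, evaluated at the midpoint $x=\frac{a+b}{2}$.

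The key observation is that the midpoint is equidistant from the endpoints, so $x-a=\frac{a+b}{2}-a=\frac{b-a}{2}$ and $b-x=b-\frac{a+b}{2}=\frac{b-a}{2}$; consequently $(x-a)^{2}=(b-x)^{2}=\frac{(b-a)^{2}}{4}$. Substituting these into the left-hand side of Theorem~\ref{t.2.5} collapses the weighted combination of endpoint values to the symmetric trapezoidal term:
\begin{equation*}
\frac{(b-x)f(b)+(x-a)f(a)}{b-a}=\frac{\frac{b-a}{2}\,f(b)+\frac{b-a}{2}\,f(a)}{b-a}=\frac{f(a)+f(b)}{2},
\end{equation*}
which is precisely the quantity appearing in the corollary.

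Next I would compute the two arguments of $f'$ on the right-hand side of Theorem~\ref{t.2.5}. With $x=\frac{a+b}{2}$ one has $\frac{x+2a}{3}=\frac{\frac{a+b}{2}+2a}{3}=\frac{5a+b}{6}$ and, symmetrically, $\frac{x+2b}{3}=\frac{a+5b}{6}$. Finally I would gather the constants: the prefactor $\frac{1}{2}$ multiplies the common factor $\frac{(x-a)^{2}}{b-a}=\frac{(b-a)^{2}/4}{b-a}=\frac{b-a}{4}$, so $\frac{1}{2}\cdot\frac{b-a}{4}=\frac{b-a}{8}$, yielding exactly
\begin{equation*}
\frac{b-a}{8}\left[\left\vert f'\left(\tfrac{5a+b}{6}\right)\right\vert+\left\vert f'\left(\tfrac{a+5b}{6}\right)\right\vert\right].
\end{equation*}
There is no genuine obstacle here; the only point requiring care is the bookkeeping of the rational arguments $\frac{5a+b}{6}$ and $\frac{a+5b}{6}$ and the cancellation of one factor of $b-a$ in the denominator, after which the stated inequality follows immediately.
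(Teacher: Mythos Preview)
Your proposal is correct and follows exactly the approach intended by the paper: the corollary is stated without a separate proof, since it is obtained by the straightforward substitution $x=\frac{a+b}{2}$ into Theorem~\ref{t.2.5}, and your arithmetic for the left-hand side, the two arguments $\frac{5a+b}{6}$, $\frac{a+5b}{6}$, and the constant $\frac{b-a}{8}$ is all accurate.
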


\section{APPLICATIONS TO SPECIAL MEANS}

Recall the following means which could be considered extensions of
arithmetic, logarithmic and generalized logarithmic from positive to real
numbers.

\begin{enumerate}
\item The arithmetic mean:%
\begin{equation*}
A=A\left( a,b\right) =\frac{a+b}{2};\text{ }a,b\in 
\mathbb{R}%
\end{equation*}

\item The logarithmic mean:%
\begin{equation*}
L\left( a,b\right) =\frac{b-a}{\ln \left\vert b\right\vert -\ln \left\vert
a\right\vert };\text{ }\left\vert a\right\vert \neq \left\vert b\right\vert ,%
\text{ }ab\neq 0,\text{ }a,b\in 
\mathbb{R}%
\end{equation*}

\item The generalized logarithmic mean:%
\begin{equation*}
L_{n}\left( a,b\right) =\left[ \frac{b^{n+1}-a^{n+1}}{\left( b-a\right)
\left( n+1\right) }\right] ^{\frac{1}{n}};\text{ }n\in 
\mathbb{Z}
\backslash \left\{ -1,0\right\} ,\text{ }a,b\in 
\mathbb{R}
,\text{ }a\neq b
\end{equation*}
\end{enumerate}

Now using the results of Section 2, we give some applications to special
means of real numbers.

\begin{proposition}
\label{p.3.1} Let $a,b\in 
\mathbb{R}
,$ $a<b,$ $0\notin \left[ a,b\right] $ and $n\in 
\mathbb{Z}
,$ $\left\vert n\right\vert \geq 2$. Then, for all $p>1$
\end{proposition}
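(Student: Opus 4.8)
The plan is to apply Theorem~\ref{t.2.2} to the auxiliary function $f:[a,b]\to\mathbb{R}$ given by $f(x)=x^{n}$. Because $0\notin[a,b]$, this $f$ is differentiable on $[a,b]$ even when $n<0$, with $f'(x)=nx^{n-1}$; moreover $f'$ is continuous on $[a,b]$, hence $f'\in L[a,b]$. For $q=\frac{p}{p-1}$ we then have $|f'(x)|^{q}=|n|^{q}|x|^{q(n-1)}$, and, since $n\neq -1,0$, $\frac{1}{b-a}\int_{a}^{b}f(u)\,du=\frac{b^{n+1}-a^{n+1}}{(b-a)(n+1)}=L_{n}^{n}(a,b)$.

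The one substantive point is to check that $|f'|^{q}$ is convex on $[a,b]$, which is exactly the hypothesis required by Theorem~\ref{t.2.2}. Put $g(x)=|x|^{q(n-1)}$. If $n\geq 2$ then $q(n-1)\geq q>1$, and $t\mapsto|t|^{r}$ is convex on all of $\mathbb{R}$ for every $r\geq 1$; if $n\leq -2$ then $q(n-1)<0$, and $t\mapsto|t|^{r}$ is convex on each of $(0,\infty)$ and $(-\infty,0)$ for every $r<0$, since its second derivative $r(r-1)|t|^{r-2}$ is positive there. As $|n|\geq 2$ and $[a,b]$ does not straddle the origin, $g$ — and therefore $|f'|^{q}=|n|^{q}g$ — is convex on $[a,b]$ in both cases. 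This is the only place where the hypotheses $|n|\geq 2$ and $0\notin[a,b]$ are used.

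With these facts in hand I would simply substitute $f(x)=x^{n}$ into the conclusion of Theorem~\ref{t.2.2}: the left-hand side becomes $\bigl|\tfrac{(b-x)b^{n}+(x-a)a^{n}}{b-a}-L_{n}^{n}(a,b)\bigr|$, while on the right every $|f'(\cdot)|^{q}$ becomes $|n|^{q}|\cdot|^{q(n-1)}$, so the common factor $|n|$ pulls out of each bracket $[\,\cdot\,]^{1/q}$; this yields the stated inequality with no further estimation. (If the intended statement is instead the symmetric case, one additionally specializes to $x=\tfrac{a+b}{2}$, which changes nothing in the argument.) I do not expect a genuine obstacle here: the only step needing attention is the convexity verification for the negative-exponent case $n\leq -2$, and that is dispatched immediately once $0\notin[a,b]$ is invoked; the proposition is a routine specialization of Theorem~\ref{t.2.2}.
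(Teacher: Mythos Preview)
Your approach matches the paper's for part~(a): the paper simply records that (\ref{3.1}) follows from Corollary~\ref{c.2.2} applied to $f(x)=x^{n}$, and Corollary~\ref{c.2.2} is precisely the $x=\tfrac{a+b}{2}$ specialisation of Theorem~\ref{t.2.2} that you describe at the end. Your detailed convexity verification for $|f'|^{q}$ is exactly the justification the paper leaves implicit.

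The one thing missing is part~(b): inequality~(\ref{3.2}), with the constant $\tfrac{1}{4}\,3^{1-1/q}$, does not come from Theorem~\ref{t.2.2} at all but from Corollary~\ref{c.2.3}, i.e.\ the $x=\tfrac{a+b}{2}$ case of Theorem~\ref{t.2.4}, which is based on the power-mean inequality rather than H\"{o}lder. The hypothesis there is again convexity of $|f'|^{q}$ on $[a,b]$, so the verification you already wrote covers it verbatim; the only adjustment needed is to invoke Corollary~\ref{c.2.3} for~(\ref{3.2}) alongside Corollary~\ref{c.2.2} for~(\ref{3.1}).
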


(a)%
\begin{equation}
\left\vert A\left( a^{n},b^{n}\right) -L_{n}^{n}\left( a,b\right)
\right\vert \leq \left\vert n\right\vert \left( b-a\right) \left( \frac{1}{%
p+1}\right) ^{\frac{1}{p}}\left( \frac{1}{2}\right) ^{\frac{1}{q}}A\left(
\left\vert a\right\vert ^{n-1},\left\vert b\right\vert ^{n-1}\right)
\label{3.1}
\end{equation}%
and

(b)%
\begin{equation}
\left\vert A\left( a^{n},b^{n}\right) -L_{n}^{n}\left( a,b\right)
\right\vert \leq \left\vert n\right\vert \left( b-a\right) \frac{3^{1-\frac{1%
}{q}}}{4}A\left( \left\vert a\right\vert ^{n-1},\left\vert b\right\vert
^{n-1}\right) .  \label{3.2}
\end{equation}

\begin{proof}
The assertion follows from Corollary \ref{c.2.2} and \ref{c.2.3} for $%
f\left( x\right) =x^{n},$ $x\in 
\mathbb{R}
,$ $n\in 
\mathbb{Z}
,$ $\left\vert n\right\vert \geq 2$.
\end{proof}

\begin{proposition}
\label{p.3.2} Let $a,b\in 
\mathbb{R}
,$ $a<b,$ $0\notin \left[ a,b\right] $. Then, for all $q\geq 1,$
\end{proposition}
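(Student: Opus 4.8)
The plan is to imitate the proof of Proposition~\ref{p.3.1}: apply one of the already-established corollaries of Section~2 to a concrete function and then recognize the resulting quantities as the means $A$ and $L$. Here the natural choice is $f(x)=1/x$ on $[a,b]$, which is admissible precisely because $0\notin[a,b]$; it is differentiable there with $f^{\prime}\in L[a,b]$, and $\left\vert f^{\prime}\right\vert^{q}=|x|^{-2q}$ has second derivative $2q(2q+1)|x|^{-2q-2}>0$ on any interval missing the origin, so $\left\vert f^{\prime}\right\vert^{q}$ is convex for every fixed $q\geq 1$. Hence Corollary~\ref{c.2.3} (equivalently, Theorem~\ref{t.2.4} with $x=\frac{a+b}{2}$) applies, which is where the restriction ``$q\geq 1$'' in the statement comes from.

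The computational heart is evaluating the two sides of Corollary~\ref{c.2.3} for $f(x)=1/x$. On the left, $\frac{f(a)+f(b)}{2}=\frac{1}{2}(a^{-1}+b^{-1})=A(a^{-1},b^{-1})$, while $\frac{1}{b-a}\int_{a}^{b}\frac{du}{u}=\frac{\ln|b|-\ln|a|}{b-a}=L^{-1}(a,b)$; this last identification is exactly where the hypothesis $0\notin[a,b]$ does real work, since one must keep the sign of $u$ straight when $a<b<0$ as well as when $0<a<b$. On the right, $\left\vert f^{\prime}(x)\right\vert=x^{-2}$, so $\left\vert f^{\prime}(a)\right\vert=|a|^{-2}$, $\left\vert f^{\prime}(b)\right\vert=|b|^{-2}$ and $\left\vert f^{\prime}\left(\frac{a+b}{2}\right)\right\vert=\frac{4}{(a+b)^{2}}=A^{-2}(a,b)$. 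Substituting these into the first estimate of Corollary~\ref{c.2.3} gives the sharp form, and then invoking the convexity of $\left\vert f^{\prime}\right\vert^{q}$ together with the subadditivity inequality $(\alpha+\beta)^{1/q}\leq\alpha^{1/q}+\beta^{1/q}$ recorded in Corollary~\ref{c.2.2} collapses the bracket into a clean bound of the shape $\frac{3^{1-1/q}}{4}(b-a)\,A(|a|^{-2},|b|^{-2})$, the $n=-1$ companion of Proposition~\ref{p.3.1}(b) (with $L$ standing in for the undefined $L_{-1}$).

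There is essentially no analytic obstacle, since Corollary~\ref{c.2.3} is already proved; the only genuine care-point is the bookkeeping in the second paragraph --- correctly matching $\frac{1}{b-a}\int_a^b u^{-1}\,du$ with $L^{-1}(a,b)$ and the derivative values with $A(|a|^{-2},|b|^{-2})$ and $A^{-2}(a,b)$. I would handle this by writing $|a|$ and $|b|$ throughout from the outset, so that the two sign regimes $0<a<b$ and $a<b<0$ are treated uniformly; after that the proposition follows by direct substitution, just as Proposition~\ref{p.3.1} followed from Corollaries~\ref{c.2.2} and~\ref{c.2.3} for $f(x)=x^{n}$.
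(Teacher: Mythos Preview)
Your proposal is correct and matches the paper's approach exactly: the paper's entire proof is the single line ``The assertion follows from Corollary~\ref{c.2.2} and~\ref{c.2.3} for $f(x)=\frac{1}{x}$,'' and you have simply unpacked that substitution in detail, verifying the convexity of $|f'|^{q}=|x|^{-2q}$ and identifying the left and right sides with $A(a^{-1},b^{-1})-L^{-1}(a,b)$ and $A(|a|^{-2},|b|^{-2})$. Your write-up emphasizes the Corollary~\ref{c.2.3} route that yields inequality~(\ref{3.4}); the companion inequality~(\ref{3.3}) drops out of the identical substitution into the last line of Corollary~\ref{c.2.2}, as you note in your closing sentence.
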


(a)%
\begin{equation}
\left\vert A\left( a^{-1},b^{-1}\right) -L^{-1}(a,b)\right\vert \leq \left(
b-a\right) \left( \frac{1}{p+1}\right) ^{\frac{1}{p}}\left( \frac{1}{2}%
\right) ^{\frac{1}{q}}A\left( \left\vert a\right\vert ^{-2},\left\vert
b\right\vert ^{-2}\right)  \label{3.3}
\end{equation}%
and

(b)%
\begin{equation}
\left\vert A\left( a^{-1},b^{-1}\right) -L^{-1}(a,b)\right\vert \leq \left(
b-a\right) \left( \frac{3^{1-\frac{1}{q}}}{4}\right) A\left( \left\vert
a\right\vert ^{-2},\left\vert b\right\vert ^{-2}\right) .  \label{3.4}
\end{equation}

\begin{proof}
The assertion follows from Corollary \ref{c.2.2} and \ref{c.2.3} for $%
f\left( x\right) =\frac{1}{x}.$
\end{proof}

\section{THE TRAPEZOIDAL FORMULA}

Let d be a division $a=x_{0}<x_{1}<...<x_{n-1}<x_{n}=b$ of the interval $%
\left[ a,b\right] $ and consider the quadrature formula 
\begin{equation}
\int_{a}^{b}f\left( x\right) dx=T\left( f,d\right) +E\left( f,d\right)
\label{4.1}
\end{equation}%
where%
\begin{equation*}
T\left( f,d\right) =\sum_{i=0}^{n-1}\frac{f\left( x_{i}\right) +f\left(
x_{i+1}\right) }{2}\left( x_{i+1}-x_{i}\right)
\end{equation*}%
for the trapezoidal version and $E\left( f,d\right) $ denotes the associated
approximation error.

\begin{proposition}
\label{p.4.1} Let $f:I\subseteq 
\mathbb{R}
\rightarrow 
\mathbb{R}
$ be a differentiable mapping on $I^{\circ }$such that $f^{\prime }\in L%
\left[ a,b\right] $, where $a,b\in I$ with $a<b$ and $\left\vert f^{\prime
}\right\vert ^{\frac{p}{p-1}}$ is convex on $\left[ a,b\right] $, where $p>1$%
. Then in $\left( \ref{4.1}\right) $, for every division $d$ of $\left[ a,b%
\right] ,$ the trapezoidal error estimate satisfies%
\begin{equation*}
\left\vert E\left( f,d\right) \right\vert \leq \left( \frac{1}{p+1}\right) ^{%
\frac{1}{p}}\left( \frac{1}{2}\right) ^{\frac{1}{q}}\sum_{i=0}^{n-1}\frac{%
\left( x_{i+1}-x_{i}\right) ^{2}}{2}\left( \left\vert f^{\prime }\left(
x_{i}\right) \right\vert +\left\vert f^{\prime }\left( x_{i+1}\right)
\right\vert \right) .
\end{equation*}
\end{proposition}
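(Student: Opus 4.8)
The plan is to apply Corollary \ref{c.2.2} on each subinterval $[x_i,x_{i+1}]$ of the division $d$ and then sum the resulting estimates. Since $\left\vert f^{\prime }\right\vert ^{p/(p-1)}$ is convex on $[a,b]$, it is in particular convex on each subinterval $[x_i,x_{i+1}]$, so the hypotheses of Corollary \ref{c.2.2} are met there. Writing that corollary with $a$ replaced by $x_i$ and $b$ replaced by $x_{i+1}$, and using the \emph{second} inequality of the corollary (which avoids the middle-point term $f^{\prime}((x_i+x_{i+1})/2)$), we get
\begin{equation*}
\left\vert \frac{f(x_i)+f(x_{i+1})}{2}-\frac{1}{x_{i+1}-x_i}\int_{x_i}^{x_{i+1}}f(u)\,du\right\vert \leq \frac{x_{i+1}-x_i}{2}\left( \frac{1}{p+1}\right)^{\frac{1}{p}}\left( \frac{1}{2}\right)^{\frac{1}{q}}\left( \left\vert f^{\prime }(x_i)\right\vert +\left\vert f^{\prime }(x_{i+1})\right\vert \right).
\end{equation*}

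Next I would multiply both sides by $(x_{i+1}-x_i)$ to clear the denominator inside the integral term, obtaining a bound on $\left\vert \frac{f(x_i)+f(x_{i+1})}{2}(x_{i+1}-x_i)-\int_{x_i}^{x_{i+1}}f(u)\,du\right\vert$ by $\frac{(x_{i+1}-x_i)^2}{2}\left( \frac{1}{p+1}\right)^{1/p}\left( \frac{1}{2}\right)^{1/q}\left( \left\vert f^{\prime }(x_i)\right\vert +\left\vert f^{\prime }(x_{i+1})\right\vert \right)$. Then I sum over $i=0,\dots,n-1$. On the left, $\sum_{i=0}^{n-1}\int_{x_i}^{x_{i+1}}f(u)\,du = \int_a^b f(u)\,du$ and $\sum_{i=0}^{n-1}\frac{f(x_i)+f(x_{i+1})}{2}(x_{i+1}-x_i)=T(f,d)$, so by the triangle inequality the left side dominates $\left\vert T(f,d)-\int_a^b f(u)\,du\right\vert = \left\vert E(f,d)\right\vert$, using the definition of $E(f,d)$ from $\left( \ref{4.1}\right)$. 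On the right, the constant $\left( \frac{1}{p+1}\right)^{1/p}\left( \frac{1}{2}\right)^{1/q}$ factors out of the sum, yielding exactly the claimed bound.

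There is essentially no genuine obstacle here; the only points requiring a little care are: (i) noting that convexity of $\left\vert f^{\prime }\right\vert^{p/(p-1)}$ on $[a,b]$ restricts to each subinterval, and $f^{\prime }\in L[a,b]$ restricts likewise, so Corollary \ref{c.2.2} genuinely applies on $[x_i,x_{i+1}]$; and (ii) invoking the triangle inequality correctly when passing from the per-subinterval estimates to the global one, so that the absolute value of the sum is bounded by the sum of the absolute values. The rest is the bookkeeping of recognizing the telescoped integral and the definition of $T(f,d)$. I would present the argument in three short moves: state the subinterval inequality from Corollary \ref{c.2.2}, sum it, and identify the pieces with $E(f,d)$.
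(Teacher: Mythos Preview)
Your proposal is correct and follows essentially the same approach as the paper: apply the second inequality of Corollary~\ref{c.2.2} on each subinterval $[x_i,x_{i+1}]$, multiply through by $x_{i+1}-x_i$, sum over $i$, and use the triangle inequality together with the definitions of $T(f,d)$ and $E(f,d)$. The paper's proof is exactly this, written slightly more tersely.
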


\begin{proof}
On applying Corollary \ref{c.2.2} on the subinterval $\left[ x_{i},x_{i+1}%
\right] \left( i=0,1,2,...,n-1\right) $ of the division, we have%
\begin{eqnarray*}
&&\left\vert \frac{f\left( x_{i}\right) +f\left( x_{i+1}\right) }{2}-\frac{1%
}{x_{i+1}-x_{i}}\int_{x_{i}}^{x_{i+1}}f\left( x\right) dx\right\vert \\
&\leq &\frac{\left( x_{i+1}-x_{i}\right) }{2}\left( \frac{1}{p+1}\right) ^{%
\frac{1}{p}}\left( \frac{1}{2}\right) ^{\frac{1}{q}}\left( \left\vert
f^{\prime }\left( x_{i}\right) \right\vert +\left\vert f^{\prime }\left(
x_{i+1}\right) \right\vert \right) .
\end{eqnarray*}%
Hence in $\left( \ref{4.1}\right) $ we have%
\begin{eqnarray*}
\left\vert \int_{a}^{b}f(x)dx-T\left( f,d\right) \right\vert &=&\left\vert
\sum_{i=0}^{n-1}\left\{ \int_{x_{i}}^{x_{i+1}}f(x)dx-\frac{f\left(
x_{i}\right) +f\left( x_{i+1}\right) }{2}\left( x_{i+1}-x_{i}\right)
\right\} \right\vert \\
&\leq &\sum_{i=0}^{n-1}\left\vert \int_{x_{i}}^{x_{i+1}}f(x)dx-\frac{f\left(
x_{i}\right) +f\left( x_{i+1}\right) }{2}\left( x_{i+1}-x_{i}\right)
\right\vert \\
&\leq &\left( \frac{1}{p+1}\right) ^{\frac{1}{p}}\left( \frac{1}{2}\right) ^{%
\frac{1}{q}}\sum_{i=0}^{n-1}\frac{\left( x_{i+1}-x_{i}\right) ^{2}}{2}\left(
\left\vert f^{\prime }\left( x_{i}\right) \right\vert +\left\vert f^{\prime
}\left( x_{i+1}\right) \right\vert \right)
\end{eqnarray*}%
which completes the proof.
\end{proof}

\begin{proposition}
\label{p.4.2} Let $f:I\subseteq 
\mathbb{R}
\rightarrow 
\mathbb{R}
$ be a differentiable mapping on $I^{\circ }$such that $f^{\prime }\in L%
\left[ a,b\right] $, where $a,b\in I$ with $a<b$. If $\left\vert f^{\prime
}\right\vert ^{q}$ is concave on $\left[ a,b\right] $, for some fixed $q>1$.
Then in $\left( \ref{4.1}\right) $, for every division $d$ of $\left[ a,b%
\right] ,$ the trapezoidal error estimate satisfies%
\begin{equation*}
\left\vert E\left( f,d\right) \right\vert \leq \left( \frac{q-1}{2q-1}%
\right) ^{\frac{q-1}{q}}\sum_{i=0}^{n-1}\frac{\left( x_{i+1}-x_{i}\right)
^{2}}{4}\left( \left\vert f^{\prime }\left( \frac{3x_{i}+x_{i+1}}{4}\right)
\right\vert +\left\vert f^{\prime }\left( \frac{x_{i}+3x_{i+1}}{4}\right)
\right\vert \right) .
\end{equation*}
\end{proposition}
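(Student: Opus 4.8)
The plan is to follow the same template as the proof of Proposition~\ref{p.4.1}, replacing the use of Corollary~\ref{c.2.2} by the estimate recorded in Remark~\ref{r.2.2} (equivalently, Theorem~\ref{t.2.3} specialized to $x=\frac{a+b}{2}$). First I would note that the hypotheses pass to each subinterval: since $\left\vert f^{\prime }\right\vert ^{q}$ is concave on $[a,b]$, its restriction to $[x_{i},x_{i+1}]$ is again concave, $f^{\prime }\in L[x_{i},x_{i+1}]$, and $q>1$ is fixed. Hence Remark~\ref{r.2.2} applies on each $[x_{i},x_{i+1}]$, $i=0,1,\dots ,n-1$, and yields
\begin{equation*}
\left\vert \frac{f(x_{i})+f(x_{i+1})}{2}-\frac{1}{x_{i+1}-x_{i}}\int_{x_{i}}^{x_{i+1}}f(x)\,dx\right\vert \leq \left( \frac{q-1}{2q-1}\right) ^{\frac{q-1}{q}}\frac{x_{i+1}-x_{i}}{4}\left( \left\vert f^{\prime }\!\left( \frac{3x_{i}+x_{i+1}}{4}\right) \right\vert +\left\vert f^{\prime }\!\left( \frac{x_{i}+3x_{i+1}}{4}\right) \right\vert \right) .
\end{equation*}

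Next I would multiply this inequality through by $x_{i+1}-x_{i}>0$, which converts the left-hand side into $\left\vert \int_{x_{i}}^{x_{i+1}}f(x)\,dx-\frac{f(x_{i})+f(x_{i+1})}{2}(x_{i+1}-x_{i})\right\vert$ and produces the factor $\tfrac{(x_{i+1}-x_{i})^{2}}{4}$ on the right, i.e.\ exactly the $i$-th summand of the asserted bound. Then, using additivity of the integral over the division to write $E(f,d)=\int_{a}^{b}f(x)\,dx-T(f,d)=\sum_{i=0}^{n-1}\bigl\{\int_{x_{i}}^{x_{i+1}}f(x)\,dx-\frac{f(x_{i})+f(x_{i+1})}{2}(x_{i+1}-x_{i})\bigr\}$, I would apply the triangle inequality and insert the per-subinterval estimates to obtain the claimed inequality for $\left\vert E(f,d)\right\vert $.

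I do not expect a genuine obstacle here: the argument is routine once Theorem~\ref{t.2.3}/Remark~\ref{r.2.2} is available, and the only points worth stating explicitly are the heredity of concavity of $\left\vert f^{\prime }\right\vert ^{q}$ to subintervals and the decomposition of the global quadrature error into the subinterval errors. One could additionally remark, in parallel with Proposition~\ref{p.4.1}, that the very same scheme applied to Corollaries~\ref{c.2.1}, \ref{c.2.3} and \ref{c.2.4} produces further trapezoidal error estimates.
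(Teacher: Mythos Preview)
Your proposal is correct and follows exactly the approach indicated in the paper: apply Remark~\ref{r.2.2} on each subinterval $[x_i,x_{i+1}]$, multiply through by $x_{i+1}-x_i$, and sum using the triangle inequality as in the proof of Proposition~\ref{p.4.1}. The paper's own proof is the one-line remark that the argument is similar to that of Proposition~\ref{p.4.1} using Remark~\ref{r.2.2}, which your write-up spells out in full.
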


\begin{proof}
The proof is similar to that of Proposition \ref{p.4.1} and using Remark \ref%
{r.2.2}.
\end{proof}

\begin{proposition}
\label{p.4.3} Let $f:I\subseteq 
\mathbb{R}
\rightarrow 
\mathbb{R}
$ be a differentiable mapping on $I^{\circ }$such that $f^{\prime }\in L%
\left[ a,b\right] $, where $a,b\in I$ with $a<b$. If $\left\vert f^{\prime
}\right\vert ^{q}$ is concave on $\left[ a,b\right] $, for some fixed $q\geq
1$.Then in $\left( \ref{4.1}\right) $, for every division $d$ of $\left[ a,b%
\right] ,$ the trapezoidal error estimate satisfies%
\begin{equation*}
\left\vert E\left( f,d\right) \right\vert \leq \frac{1}{8}%
\sum_{i=0}^{n-1}\left( x_{i+1}-x_{i}\right) ^{2}\left( \left\vert f^{\prime
}\left( \frac{5x_{i}+x_{i+1}}{6}\right) \right\vert +\left\vert f^{\prime
}\left( \frac{x_{i}+5x_{i+1}}{6}\right) \right\vert \right) .
\end{equation*}
\end{proposition}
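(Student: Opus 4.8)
The plan is to follow the proof of Proposition~\ref{p.4.1} almost verbatim, only replacing Corollary~\ref{c.2.2} by Corollary~\ref{c.2.4}. The first thing to record is that, since $\left\vert f^{\prime }\right\vert ^{q}$ is assumed concave on the whole interval $\left[ a,b\right] $, it is concave on each subinterval $\left[ x_{i},x_{i+1}\right] $ of the division $d$ (concavity on an interval restricts to concavity on any subinterval). Hence the hypotheses of Theorem~\ref{t.2.5}, and therefore of Corollary~\ref{c.2.4}, are met on every $\left[ x_{i},x_{i+1}\right] $ with $i=0,1,\dots ,n-1$.

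Next I would apply Corollary~\ref{c.2.4} on $\left[ x_{i},x_{i+1}\right] $, which yields
\[
\left\vert \frac{f\left( x_{i}\right) +f\left( x_{i+1}\right) }{2}-\frac{1}{x_{i+1}-x_{i}}\int_{x_{i}}^{x_{i+1}}f\left( x\right) dx\right\vert \leq \frac{x_{i+1}-x_{i}}{8}\left( \left\vert f^{\prime }\left( \frac{5x_{i}+x_{i+1}}{6}\right) \right\vert +\left\vert f^{\prime }\left( \frac{x_{i}+5x_{i+1}}{6}\right) \right\vert \right) .
\]
Multiplying both sides by $x_{i+1}-x_{i}$ turns the left-hand side into $\left\vert \int_{x_{i}}^{x_{i+1}}f\left( x\right) dx-\frac{f\left( x_{i}\right) +f\left( x_{i+1}\right) }{2}\left( x_{i+1}-x_{i}\right) \right\vert $ and the right-hand side into $\frac{\left( x_{i+1}-x_{i}\right) ^{2}}{8}\big( \left\vert f^{\prime }\left( \frac{5x_{i}+x_{i+1}}{6}\right) \right\vert +\left\vert f^{\prime }\left( \frac{x_{i}+5x_{i+1}}{6}\right) \right\vert \big) $, which is exactly the $i$-th term of the target sum.

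Finally, using $\int_{a}^{b}f=\sum_{i=0}^{n-1}\int_{x_{i}}^{x_{i+1}}f$ together with the definition of $T\left( f,d\right) $, I would apply the triangle inequality to get
\[
\left\vert E\left( f,d\right) \right\vert =\left\vert \int_{a}^{b}f\left( x\right) dx-T\left( f,d\right) \right\vert \leq \sum_{i=0}^{n-1}\left\vert \int_{x_{i}}^{x_{i+1}}f\left( x\right) dx-\frac{f\left( x_{i}\right) +f\left( x_{i+1}\right) }{2}\left( x_{i+1}-x_{i}\right) \right\vert ,
\]
and then bound each summand by the estimate obtained above; collecting the factors $\frac{1}{8}$ gives the asserted inequality. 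There is no genuine difficulty here: the computation is identical in spirit to that of Proposition~\ref{p.4.1}, and the only point that deserves a line of justification is the passage from global to local concavity of $\left\vert f^{\prime }\right\vert ^{q}$, which is what licenses the piecewise application of Corollary~\ref{c.2.4}.
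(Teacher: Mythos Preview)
Your proposal is correct and follows exactly the approach indicated in the paper: apply Corollary~\ref{c.2.4} on each subinterval $[x_i,x_{i+1}]$, multiply through by $x_{i+1}-x_i$, and sum via the triangle inequality, just as in the proof of Proposition~\ref{p.4.1}. Your added remark that concavity of $|f'|^q$ on $[a,b]$ restricts to each subinterval is the only detail worth making explicit, and you have done so.
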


\begin{proof}
The proof is similar to that of Proposition \ref{p.4.1} and using Corollary %
\ref{c.2.4}.
\end{proof}

\end{document}